\numberwithin{equation}{section}
\newtheorem{thm}{Theorem}[section]
\newtheorem{cor}[thm]{Corollary}
\newtheorem{lemma}[thm]{Lemma}
\newtheorem{prop}[thm]{Proposition}
\theoremstyle{definition}
\newtheorem{defn}[thm]{Definition}
\theoremstyle{remark}
\newtheorem{example}[thm]{Example}
\newtheorem{remark}[thm]{Remark}
\setlist[enumerate,1]{label={\rm(\arabic*)}, ref={\rm\arabic*}} 
\newcommand{\lra}{\longrightarrow}
\newcommand{\longhookrightarrow}{\lhook\joinrel\longrightarrow}
\newcommand{\supth}[1]{\ensuremath{#1^{\mathrm{th}}}}
\DeclareMathOperator{\ev}{ev}
\DeclareMathOperator{\Fl}{Fl}
\DeclareMathOperator{\Gr}{Gr}
\DeclareMathOperator{\QH}{QH}
\DeclareMathOperator{\QK}{QK}
\DeclareMathOperator{\Jac}{Jac}
\DeclareMathOperator{\HH}{H}
\DeclareMathOperator{\K}{K}
\DeclareMathOperator{\pt}{pt}
\DeclareMathOperator{\Rep}{Rep}
\DeclareMathOperator{\Ker}{Ker}
\DeclareMathOperator{\Sp}{Sp}
\newcommand{\C}{{\mathbb C}}
\newcommand{\bP}{{\mathbb P}}
\newcommand{\Z}{{\mathbb Z}}
\newcommand{\cF}{{\mathcal F}}
\newcommand{\cO}{{\mathcal O}}
\newcommand{\cQ}{{\mathcal Q}}
\newcommand{\cS}{{\mathcal S}}
\newcommand{\wb}{\overline}
\newcommand{\Mb}{\wb{\mathcal M}}
\newcommand{\M}{\overline{M}}
\DeclarePairedDelimiter{\angles}{\langle}{\rangle}
\newcommand{\egw}[2]{\angles{ #1 }^T_{#2}}
\def\poly{{\mathrm{poly}}}
\title{A Nakayama result for the quantum K-theory\\ of homogeneous spaces}
\author{Wei Gu}
\address{Zhejiang Institute of Modern Physics, School of Physics, Zhejiang University, Hangzhou, Zhejiang 310058, China}
\email{guwei2875@zju.edu.cn}
\author{Leonardo C.~Mihalcea}
\address{
Department of Mathematics, 
225 Stanger Street, McBryde Hall,
Virginia Tech University, 
Blacksburg, VA 24061
USA
}
\email{lmihalce@vt.edu}
\author{Eric Sharpe}
\address{Department of Physics MC 0435, 850 West Campus Drive,
Virginia Tech University, Blacksburg VA  24061
USA}
\email{ersharpe@vt.edu}
\author{Weihong Xu}
\address{Division of Physics, Mathematics,  and Astronomy,
Caltech, 1200 E. California Blvd.,
Pasadena CA 91125  USA}
\email{weihong@caltech.edu}
\author{Hao Zhang}
\address{Department of Physics MC 0435, 850 West Campus Drive,
Virginia Tech University, Blacksburg VA  24061
USA}
\email{hzhang96@vt.edu}
\author{Hao Zou}
\address{Center for Mathematics and Interdisciplinary Sciences, Fudan University, Shanghai 200433, China}
 \address{Shanghai Institute for Mathematics and Interdisciplinary Sciences, Shanghai 200433, China}
\email{haozou@fudan.edu.cn}
\begin{document}


\removeabove{0.8cm}
\removebetween{0.8cm}
\removebelow{0.8cm}

\maketitle

\begin{prelims}

\DisplayAbstractInEnglish

\bigskip

\DisplayKeyWords

\medskip

\DisplayMSCclass

\end{prelims}


\newpage

\setcounter{tocdepth}{1}

\tableofcontents


\section{Introduction}\label{sec:intro}
In the study of presentations by generators and relations of the quantum cohomology ring of a projective manifold, one of the most widely used results is that of Siebert and Tian \cite{siebert.tian:on}.  They proved that a generating set of the ideal of relations for the quantum cohomology ring may be obtained simply by quantizing each of the generators of the ideal of relations in the {\em ordinary} cohomology ring -- no other relations were needed.  The proof is based on a graded version of the Nakayama lemma; see also \cite{fulton.pandh:notes} and, in the equivariant case, \cite{mihalcea:giambelli}.  This technique was quite successful in obtaining presentations of quantum cohomology rings in many situations.

Our main goal in this paper is to extend Siebert and Tian's result to Givental and Lee's quantum K-theory ring of homogeneous spaces; see \cite{givental:onwdvv,lee:QK}.  The quantum K-theory ring is not graded, and a more careful analysis is needed, which, for instance, requires working over rings completed along the ideal of quantum parameters.  We then show that a similar statement as in quantum cohomology holds; \textit{cf.}~\Cref{thm:qktfg}.

Our approach to this problem expands on results from \cite{gu2022quantum}, where we studied the quantum K-ring of Grassmann manifolds, and where we proved a weaker version of the general Nakayama-type result from this paper, for certain rings of power series.  A key hypothesis from \cite{gu2022quantum} is that the presentation obtained by quantizing each of the classical generators satisfies a module-finiteness condition. For the quantum K-ring of Grassmannians, we found an {\em ad hoc} proof of this condition; a similar situation happened in \cite{maeno.naito.sagaki:QKideal}.  However, it turns out that the required finiteness holds very generally, including in the case of most interest to us -- the (equivariant) quantum K-ring of a homogeneous space $G/P$. See \Cref{sec:completions} for precise statements, especially \Cref{prop:mainNak}. It is not difficult to show that basic properties of the quantum K-rings of homogeneous spaces satisfy all hypotheses from this corollary, leading to our main application for quantum K-rings proved in \Cref{thm:qktfg}. Along the way we also prove a folklore result which states that the Schubert divisors generate the equivariant quantum K-ring of the complete flag manifold $G/B$ as an algebra; see \Cref{thm:main}.
  
In \Cref{sec:QKWhitney} we illustrate our methods in the case of the equivariant quantum K-ring of partial flag manifolds. We prove that the Whitney relations obtained from the tautological subbundles give the full presentation of the ordinary equivariant K-ring (similar to one by Lascoux \cite{lascoux:anneau}).  An explicit quantization of this set of relations was conjectured in \cite{gu2023quantum}. The conjecture was known for Grassmannians, see \cite{gu2022quantum}, and for incidence varieties $\Fl(1,n-1;n)$, see \cite{GMSXZZ:QKW}; recently, Huq-Kuruvilla proved the full conjecture in \cite{huq2024quantum}.  We use these `quantum K-Whitney' relations to deduce our presentation; see \Cref{conj2:lambda_y}.

A discussion on how this presentation relates to other ones in the literature is given at the end of \Cref{sec:QKpres}.  We further illustrate the Whitney presentation for $\QK_T(\Fl(3))$ in \Cref{sec:example}, and also include a discussion on the need for completions.

\subsection*{Acknowledgments}
We thank Anders Buch, Linda Chen, Elana Kalashnikov, Peter Koroteev, Y.\,P.~Lee, and Henry Liu for helpful discussions. Special thanks are due to Prof.~Satoshi Naito for many inspiring discussions, and for pointing out some references. We are also grateful to an anonymous referee for a careful reading and valuable suggestions.

\section{About finiteness of completed rings}\label{sec:completions}

For notions about completions, we refer to \cite[Chapter~10]{AM:intro}; see also \cite[Appendix A]{gu2022quantum}.  The main fact we utilize in this paper is the following result proved in \cite[Proposition~A.3]{gu2022quantum}.

\begin{prop}[Gu--Mihalcea--Sharpe--Zou]\label{prop:Nakiso}
Let $A$ be a Noetherian integral domain, and let $\mathfrak{a} \subset A$ be an ideal.~Assume that $A$ is complete in the $\mathfrak{a}$-adic topology.  Let $M,N$ be finitely generated $A$-modules.

Assume that the $A$-module $N$ and the $A/\mathfrak{a}$-module $N/\mathfrak{a}N$ are both free modules of the same rank $p< \infty$, and that we are given an $A$-module homomorphism $f\colon M \to N$ such that the induced $A/\mathfrak{a}$-module map $\overline{f}\colon M/\mathfrak{a} M \to N / \mathfrak{a}N$ is an isomorphism of $A/\mathfrak{a}$-modules.

Then $f$ is an isomorphism.  
\end{prop}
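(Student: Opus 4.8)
The plan is to deduce the isomorphism from the surjectivity of $f$ (via the graded Nakayama lemma in its $\mathfrak{a}$-adic form) together with an injectivity argument that uses that $N$ is free and that $A$ is a Noetherian integral domain. First I would establish surjectivity of $f$. Set $C = \operatorname{coker}(f) = N/f(M)$, a finitely generated $A$-module because $N$ is. The hypothesis that $\overline f \colon M/\mathfrak{a}M \to N/\mathfrak{a}N$ is surjective says exactly that $C = \mathfrak{a} C$, i.e.\ $f(M) + \mathfrak{a} N = N$. Iterating gives $f(M) + \mathfrak{a}^n N = N$ for all $n$. Since $N$ is a finitely generated module over the Noetherian ring $A$ which is $\mathfrak{a}$-adically complete, $N$ is $\mathfrak{a}$-adically separated (Krull intersection: $\bigcap_n \mathfrak{a}^n N = 0$), and in fact complete. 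A standard successive-approximation argument then upgrades $f(M)+\mathfrak{a}N = N$ to $f(M) = N$: given $y \in N$, build a sequence $m_k \in M$ with $y - f(m_0 + \cdots + m_k) \in \mathfrak{a}^{k+1}N$, using at each stage that $\mathfrak{a}^{k}N = f(\mathfrak{a}^k M) + \mathfrak{a}^{k+1}N$; completeness of $M$ (or just of $N$ together with a lift through the surjection $M \twoheadrightarrow f(M)$, handled by the Artin--Rees / completeness machinery) lets the partial sums converge to some $m \in M$ with $f(m) = y$. Hence $f$ is surjective.

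Next I would prove injectivity. Since $f$ is surjective and $N$ is free of finite rank $p$, it is automatically split: choose an $A$-basis of $N$, lift each basis element to $M$, and obtain an $A$-module section $s\colon N \to M$ with $f \circ s = \operatorname{id}_N$. Thus $M \cong N \oplus K$ where $K = \ker(f)$, and $f$ is an isomorphism if and only if $K = 0$. Now reduce mod $\mathfrak{a}$: tensoring the split exact sequence $0 \to K \to M \to N \to 0$ with $A/\mathfrak{a}$ keeps it (split) exact, so $M/\mathfrak{a}M \cong N/\mathfrak{a}N \oplus K/\mathfrak{a}K$. But by hypothesis $\overline f$ is an \emph{isomorphism}, which forces $K/\mathfrak{a}K = 0$. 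Since $K$ is a finitely generated $A$-module ($A$ Noetherian, $K \subseteq M$) and $A$ is $\mathfrak{a}$-adically complete, the $\mathfrak{a}$-adic Nakayama lemma gives $K = 0$. Therefore $f$ is injective, and combined with surjectivity, an isomorphism.

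The main obstacle is the surjectivity step: one must be careful that $f(M)+\mathfrak{a}N=N$ genuinely implies $f(M)=N$, which is \emph{not} formal from naive Nakayama since $M$ need not be finitely generated over a local ring and $\mathfrak{a}$ is not assumed maximal --- here one genuinely uses $\mathfrak{a}$-adic completeness of $A$ (hence of the finitely generated module $N$) and a convergence argument, rather than determinant trickery. An alternative, cleaner route for this step: apply Nakayama directly to the finitely generated module $C = \operatorname{coker}(f)$ over the complete ring $A$ --- the relevant form of Nakayama states that if $C$ is finitely generated over an $\mathfrak{a}$-adically complete Noetherian ring and $C = \mathfrak{a}C$, then $C = 0$ (this follows from Krull's intersection theorem applied to $C$, since $C = \mathfrak{a}C = \mathfrak{a}^n C \subseteq \bigcap_n \mathfrak{a}^n C = 0$). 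This makes the surjectivity essentially immediate and isolates all the subtlety into invoking the correct version of Nakayama; the integral domain hypothesis then plays no explicit role in this proof, though it is harmless to keep. Once surjectivity is in hand, the splitting-plus-Nakayama argument for injectivity is routine, and the equality of ranks $p$ guarantees the bookkeeping with $K/\mathfrak{a}K$ closes up correctly.
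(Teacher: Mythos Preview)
The present paper does not give a proof of this proposition; it quotes the statement verbatim from \cite[Prop.~A.3]{gu2022quantum} and uses it as a black box. So there is no ``paper's own proof'' to compare against here.

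Your argument is nonetheless correct and self-contained. The cleaner route you outline for surjectivity---apply Nakayama's lemma to $C=\operatorname{coker}(f)$, using that $\mathfrak{a}$ lies in the Jacobson radical of an $\mathfrak{a}$-adically complete ring (cf.~\cite[Prop.~10.15]{AM:intro}, which the paper itself invokes elsewhere)---is the right way to do it; the successive-approximation version you sketch first is salvageable but, as you note, needs completeness of $M$, which does hold since $M$ is finitely generated over the complete Noetherian ring $A$. For injectivity, the splitting argument via freeness of $N$ followed by Nakayama on $K=\ker(f)$ is standard and correct.

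Your observation that neither the integral-domain hypothesis nor the separate rank hypothesis on $N/\mathfrak{a}N$ is actually used is accurate: your proof goes through for any Noetherian $\mathfrak{a}$-adically complete ring $A$, with $N$ finite free and $\overline{f}$ an isomorphism. The extra hypotheses in the stated proposition are likely artifacts of the original argument in \cite{gu2022quantum} (e.g.\ a rank comparison over the fraction field), but your approach sidesteps them entirely.
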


A key hypothesis needed in this proposition is the module finiteness of $M$.  We record in \Cref{prop:KNak} below a simple criterion for when this holds, in the case when $M$ is a quotient of a power series ring; this is the case for our main application.  One may also deduce \Cref{prop:KNak} directly from \cite[Exercise 7.4]{eisenbud:CAbook};\begin{footnote}{We thank Prof. S. Naito for providing us with this reference.}\end{footnote} for the convenience of the reader, we include a proof.

We start with the following general result proved in \cite[Theorem~8.4]{matsumura:commutative}; see also \cite[\href{https://stacks.math.columbia.edu/tag/031D}{Tag 031D}]{StacksProj}.

\begin{lemma}\label{lemma:fgstacks}
Let $A$ be a commutative ring, and let $\mathfrak{a}\subset A$ be an ideal.  Let $M$ be an $A$-module. Assume that $A$ is $\mathfrak{a}$-adically complete, $\bigcap_{n \ge 1} \mathfrak{a}^n M = (0)$, and that $M/\mathfrak{a}M$ is a finitely generated $A/\mathfrak{a}$-module.  Then $M$ is a finitely generated $A$-module.\end{lemma}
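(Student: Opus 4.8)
The plan is to establish this as the \emph{topological Nakayama lemma} by a successive-approximation argument; alternatively one may simply invoke \cite[Thm.~8.4]{matsumura:commutative} or \cite[\href{https://stacks.math.columbia.edu/tag/031D}{Tag 031D}]{StacksProj}. Since $M/\mathfrak{a}M$ is finitely generated over $A/\mathfrak{a}$, fix elements $x_1,\dots,x_r\in M$ whose images span it, and let $N=Ax_1+\dots+Ax_r\subseteq M$ be the $A$-submodule they generate; it suffices to prove $N=M$. Because the $x_i$ span $M/\mathfrak{a}M$ one immediately has $M=N+\mathfrak{a}M$, and for every $n\ge 0$, multiplying this relation by $\mathfrak{a}^n$ gives $\mathfrak{a}^nM=\mathfrak{a}^nN+\mathfrak{a}^{n+1}M$.

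Next I would fix an arbitrary $y\in M$ and strip off its contribution one graded piece at a time. From $M=N+\mathfrak{a}M$ write $y=\sum_i c_i^{(1)}x_i+w_1$ with $w_1\in\mathfrak{a}M$; and having produced $w_n\in\mathfrak{a}^nM$, apply $\mathfrak{a}^nM=\mathfrak{a}^nN+\mathfrak{a}^{n+1}M$ (and $\mathfrak{a}^nN=\sum_i\mathfrak{a}^nx_i$) to write $w_n=\sum_i c_i^{(n+1)}x_i+w_{n+1}$ with $c_i^{(n+1)}\in\mathfrak{a}^n$ and $w_{n+1}\in\mathfrak{a}^{n+1}M$. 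Telescoping, for every $k\ge 1$ one obtains $y=\sum_i\bigl(\sum_{n=1}^k c_i^{(n)}\bigr)x_i+w_k$ with $c_i^{(n)}\in\mathfrak{a}^{n-1}$ and $w_k\in\mathfrak{a}^kM$.

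Now the two hypotheses on $A$ and $M$ come in. Because $c_i^{(n)}\in\mathfrak{a}^{n-1}$ and $A$ is $\mathfrak{a}$-adically complete, each series $c_i:=\sum_{n\ge 1}c_i^{(n)}$ converges in $A$; set $z=\sum_i c_ix_i\in N$. Comparing with the telescoped identity gives $y-z=w_k-\sum_i\bigl(\sum_{n>k}c_i^{(n)}\bigr)x_i$ for every $k$. Each tail $\sum_{n>k}c_i^{(n)}$ is a limit of partial sums lying in $\mathfrak{a}^k$, and $\mathfrak{a}^k$ is open, hence closed, in the $\mathfrak{a}$-adic topology, so $\sum_{n>k}c_i^{(n)}\in\mathfrak{a}^k$; together with $w_k\in\mathfrak{a}^kM$ this yields $y-z\in\mathfrak{a}^kM$. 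Since $k$ was arbitrary, $y-z\in\bigcap_{k\ge 1}\mathfrak{a}^kM=(0)$ by hypothesis, hence $y=z\in N$. As $y$ was arbitrary, $M=N$ is finitely generated.

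I expect the only subtle point to be this last step: one must verify that the limits $c_i$ genuinely exhibit $y$ as a finite $A$-combination of the $x_i$, and this is precisely where both the completeness of $A$ (to make sense of the $c_i$) and the separation hypothesis $\bigcap_n\mathfrak{a}^nM=(0)$ (to absorb the accumulated error) are used; the reduction to $M=N+\mathfrak{a}M$ and the closedness of $\mathfrak{a}^k$ are purely formal. Since in our intended applications $A$ is a power series ring over a Noetherian base and $\mathfrak{a}$ is the ideal of quantum parameters, these hypotheses will be straightforward to verify.
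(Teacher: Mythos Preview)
Your proposal is correct. In the paper this lemma is not proved at all: it is simply recorded as a known result, with a citation to \cite[Thm.~8.4]{matsumura:commutative} and \cite[\href{https://stacks.math.columbia.edu/tag/031D}{Tag 031D}]{StacksProj}. You cite exactly the same two references in your opening sentence, so in that sense your approach coincides with the paper's. You then go further and supply the standard successive-approximation proof; this is extra content beyond what the paper includes, but the argument is accurate and the points you flag as subtle (convergence of $c_i$ via completeness of $A$, absorption of the error via $\bigcap_n\mathfrak{a}^nM=0$, closedness of $\mathfrak{a}^k$) are handled correctly.
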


For a commutative ring $S$ with $1$, we denote by $\Jac(S)$ its Jacobson radical, \textit{i.e.}~the intersection of all its maximal ideals. It is proved in \cite[Proposition~1.9]{AM:intro} that $x \in \Jac(S)$ if and only if $1-xy$ is a unit in $S$ for all $y \in S$.

\begin{lemma}\label{lemma:fJac} 
Let $R,S$ be commutative rings with $1$ and $\pi\colon R \to S$ be a surjective ring homomorphism with $\pi(1)=1$. Then:
    \begin{enumerate}
        \item We have $\pi(\Jac(R)) \subseteq \Jac(S)$. 
        \item\label{item:ideal} If\, \(J\) is an ideal in \(R\), then $\pi(J)$ is an ideal in $S$.
    \end{enumerate}
\end{lemma}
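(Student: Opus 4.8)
The plan is to prove part~(2) first and then use it, together with the unit criterion for the Jacobson radical recalled just before the statement (namely \cite[Prop.~1.9]{AM:intro}: $x\in\Jac(S)$ iff $1-xy$ is a unit of $S$ for all $y\in S$), to obtain part~(1).

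For part~(2), let $J\subseteq R$ be an ideal. The set $\pi(J)$ is closed under addition because $\pi$ is a homomorphism of additive groups, so it only remains to check absorption. Here surjectivity of $\pi$ is the essential input: given $s\in S$ and $\pi(j)\in\pi(J)$, write $s=\pi(r)$ for some $r\in R$; then $s\,\pi(j)=\pi(r)\pi(j)=\pi(rj)\in\pi(J)$, since $rj\in J$. (Without surjectivity the image of an ideal need not be an ideal, e.g.\ the image of $2\Z$ under the inclusion $\Z\hookrightarrow\Q$.) Hence $\pi(J)$ is an ideal of $S$.

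For part~(1), fix $x\in\Jac(R)$; I want to show $\pi(x)\in\Jac(S)$. By the cited criterion it suffices to check that $1-\pi(x)y$ is a unit in $S$ for every $y\in S$. Using surjectivity, write $y=\pi(r)$ with $r\in R$; since $\pi(1)=1$ we get $1-\pi(x)y=\pi(1-xr)$. Applying the same criterion in $R$ to $x\in\Jac(R)$, the element $1-xr$ is a unit of $R$, say $(1-xr)u=1$ with $u\in R$; applying $\pi$ gives $\pi(1-xr)\pi(u)=1$, so $\pi(1-xr)$ is a unit of $S$. Thus $1-\pi(x)y$ is a unit for all $y\in S$, whence $\pi(x)\in\Jac(S)$. (Alternatively, since $\pi$ is surjective the pullback of a maximal ideal of $S$ is a maximal ideal of $R$, so $\Jac(R)\subseteq\pi^{-1}(\Jac(S))$, and applying $\pi$ and using surjectivity again yields $\pi(\Jac(R))\subseteq\Jac(S)$.)

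There is no genuine obstacle here: the statement is elementary and the argument is a few lines. The only points worth making explicit in the write-up are that surjectivity of $\pi$ is used essentially in both parts — for absorption in~(2) and to range over all $y\in S$ in~(1) — and that the hypothesis $\pi(1)=1$ is exactly what ensures inverses are preserved, so that the unit criterion transfers from $R$ to $S$.
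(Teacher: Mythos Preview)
Your proof is correct and follows essentially the same approach as the paper: both use the unit criterion for the Jacobson radical together with surjectivity to show that $1-\pi(x)y=\pi(1-xr)$ is a unit, and both note that part~(2) is immediate from the definitions (you simply spell out the absorption step in more detail). The only cosmetic difference is that you prove part~(2) first, whereas the paper handles part~(1) first and dismisses~(2) in one line.
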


\begin{proof} 
Let $x \in \Jac(R)$. Then \(1-xr\) is a unit in \(R\) for all \(r\in R\). This implies \(f(1-xr)=1-f(x)f(r)\) is a unit in \(S\). Since \(f\) is surjective, this means \(f(x)\in \Jac(S)\).

Part~\eqref{item:ideal} is immediate from the definitions.
\end{proof}

From now on, $S$ is a commutative Noetherian ring, and \(I\) is an ideal of the formal power series ring $S[\![q_1, \ldots, q_k]\!]$. Let
\[\pi\colon S[\![q_1, \ldots, q_k]\!] \lra M\coloneqq S[\![q_1, \ldots, q_k]\!]/I\]
be the projection. Let
\[ J\coloneqq \langle q_1, \ldots, q_k \rangle \subset S[\![q_1, \ldots, q_k]\!]. \]

\begin{lemma}\label{lemma:piJac}
  The ideal $\pi(J)$ is contained in the Jacobson radical of\, $M$.
\end{lemma}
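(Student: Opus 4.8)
The plan is to show that $\pi(J)$ consists of elements $x$ of $M$ such that $1 - xy$ is a unit in $M$ for every $y \in M$; by the criterion of \cite[Prop. 1.9]{AM:intro} quoted above, this is exactly the statement that $\pi(J) \subseteq \Jac(M)$. First I would recall that $J = \langle q_1,\ldots,q_k\rangle$ is contained in the Jacobson radical of the power series ring $R \coloneqq S[\![q_1,\ldots,q_k]\!]$: indeed, for any $f \in J$ and any $g \in R$, the element $fg$ has zero constant term, so $1 - fg = 1 + (\text{higher order terms})$ is invertible in $R$ via the usual geometric-series inversion $\sum_{n\ge 0}(fg)^n$, which converges in the $J$-adic (equivalently, here, the standard) topology on the power series ring. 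Hence $J \subseteq \Jac(R)$.

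Then I would invoke \Cref{lemma:fJac}\,(1) applied to the surjection $\pi : R \to M$: it gives $\pi(\Jac(R)) \subseteq \Jac(M)$ directly. Combining this with $J \subseteq \Jac(R)$ from the previous paragraph yields
\[
\pi(J) \subseteq \pi(\Jac(R)) \subseteq \Jac(M),
\]
which is the desired conclusion.

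I do not expect a serious obstacle here; the only point requiring a word of care is the invertibility of $1 - fg$ in the power series ring $R$ when $fg \in J$, i.e.\ that the formal geometric series $\sum_{n \ge 0}(fg)^n$ is a well-defined element of $S[\![q_1,\ldots,q_k]\!]$. This holds because $fg \in J^1$ forces $(fg)^n \in J^n$, and each coefficient of the sum (the coefficient of a fixed monomial $q^\alpha$) receives contributions from only finitely many terms $(fg)^n$ (namely those with $n \le |\alpha|$), so the sum is coefficient-wise finite and defines an element of $R$ which is visibly a two-sided inverse of $1 - fg$. With that in hand, the rest is a two-line chain of inclusions using \Cref{lemma:fJac}.
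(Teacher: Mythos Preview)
Your proof is correct and follows essentially the same approach as the paper: the paper cites \cite[Prop.~10.15]{AM:intro} for the fact that $J \subseteq \Jac(S[\![q_1,\ldots,q_k]\!])$ and then applies \Cref{lemma:fJac}, whereas you spell out the geometric-series argument behind that proposition before applying \Cref{lemma:fJac} in the same way.
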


\begin{proof}
By \cite[Proposition~10.15]{AM:intro} the ideal  $J$ is contained in the Jacobson radical of $S[\![q_1, \ldots, q_k]\!]$. Then the claim follows from \Cref{lemma:fJac}.\end{proof}

\begin{cor}\label{cor:separated-gen}
  We have that $\bigcap_{n \ge 1} \pi(J)^n = (0)$.\end{cor}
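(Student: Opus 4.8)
The plan is to reduce the corollary to the Krull intersection theorem applied to the quotient ring $M$ itself. Write $R\coloneqq S[\![q_1,\ldots,q_k]\!]$. First I would record two elementary points: since $\pi$ is a surjective ring homomorphism, $\pi(J)$ is an ideal of $M$ by \Cref{lemma:fJac}, and $\pi(J)^n=\pi(J^n)$ for every $n$ (because $J^n$ is generated by the $n$-fold products $q_{i_1}\cdots q_{i_n}$ and $\pi$ is multiplicative). Consequently $\bigcap_{n\ge1}\pi(J)^n=\bigl(\bigcap_{n\ge1}(J^n+I)\bigr)/I$, so the assertion is equivalent to saying that $I$ is closed in the $J$-adic topology of $R$; however it is cleanest to argue directly in $M$.

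Next I would invoke Noetherianity. Because $S$ is Noetherian, the power series ring $R=S[\![q_1,\ldots,q_k]\!]$ is Noetherian (the Hilbert basis theorem for power series rings, \cite[Cor. 10.27]{AM:intro}), hence so is its quotient $M$. Set $N\coloneqq\bigcap_{n\ge1}\pi(J)^n$; this is an ideal of $M$, and it is finitely generated since $M$ is Noetherian. The Krull intersection theorem in its Artin--Rees form \cite[Prop. 10.18]{AM:intro}, applied to the ideal $\pi(J)\subset M$, then gives $\pi(J)\cdot N=N$.

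Finally, by \Cref{lemma:piJac} we have $\pi(J)\subseteq\Jac(M)$, so Nakayama's lemma applies to the finitely generated module $N$ with $\pi(J)N=N$ and forces $N=(0)$, which is the claim. One may alternatively quote the packaged statement \cite[Cor. 10.19]{AM:intro}: if $M$ is Noetherian and an ideal of $M$ is contained in $\Jac(M)$, then the intersection of its powers is zero. I do not expect any genuine obstacle here; the only points needing attention are confirming that $M$ is Noetherian --- immediate from $S$ Noetherian together with the power-series Hilbert basis theorem --- and citing \Cref{lemma:piJac} so that Nakayama is available. The corollary is thus a bookkeeping consequence of the preceding lemma and standard commutative algebra.
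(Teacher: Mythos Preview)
Your proposal is correct and is essentially the paper's own proof: both arguments note that $M$ is Noetherian (via \cite[Cor.~10.27]{AM:intro} applied to $S[\![q_1,\ldots,q_k]\!]$), invoke \Cref{lemma:piJac} to get $\pi(J)\subseteq\Jac(M)$, and then conclude by \cite[Cor.~10.19]{AM:intro}. Your additional unpacking via Artin--Rees and Nakayama simply reproves that corollary, and the side remark relating $\bigcap\pi(J)^n$ to the $J$-adic closure of $I$ is accurate but not needed.
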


\begin{proof}
Note that $S[\![q_1, \ldots, q_k]\!]$ is Noetherian from \cite[Corollary~10.27]{AM:intro}. Then its quotient $M$ is also Noetherian, and by \Cref{lemma:piJac} we have that $\pi(J) \subset \Jac(M)$. The claim follows from a corollary of Krull's theorem, \cite[Corollary~10.19]{AM:intro}, applied to $M$ as a module over {$S[\![q_1, \ldots, q_k]\!]$} and the ideal $\pi(J)$.
\end{proof}

Let us further assume  that $S$ is an $R$-algebra for a Noetherian ring $R$. Let 
$$A\coloneqq R[\![q_1, \ldots, q_k]\!] \subset S[\![q_1, \ldots, q_k]\!]$$ with ideal $\mathfrak{a}=\langle q_1, \ldots, q_k\rangle \subset A$.

\begin{prop}\label{prop:KNak}
Recall that $M \coloneqq S[\![q_1, \ldots, q_k]\!]/I$.  If $M/\mathfrak{a}M$ is a finitely generated \(A/\mathfrak{a}\)-module, then
\begin{enumerate}
    \item\label{item:fg} $M$ is a finitely generated $A$-module;
    \item\label{item:complete} \(M\) is \(\mathfrak{a}\)-adically complete.
\end{enumerate}
\end{prop}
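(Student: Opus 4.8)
The plan is to deduce both parts from the lemmas already assembled in this section, using $\mathfrak{a}$ rather than $\pi(J)$ as the relevant ideal, and making sure the "separatedness" hypothesis transfers.

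\textbf{Proof of \eqref{item:fg}.} First I would observe that $A = R[\![q_1,\ldots,q_k]\!]$ is Noetherian by \cite[Cor. 10.27]{AM:intro}, and that it is $\mathfrak{a}$-adically complete, since a power series ring over a Noetherian ring is complete in the adic topology of the ideal generated by the variables (again \cite[Ch. 10]{AM:intro}). So the ambient ring $A$ satisfies the standing hypotheses of \Cref{lemma:fgstacks}. What remains is to check the two module-level hypotheses of that lemma for $M$ viewed as an $A$-module. The hypothesis that $M/\mathfrak{a}M$ is a finitely generated $A/\mathfrak{a}$-module is exactly our assumption. For the separatedness condition $\bigcap_{n\ge 1}\mathfrak{a}^n M = (0)$, the key point is that $\mathfrak{a}S[\![q_1,\ldots,q_k]\!] = J$, because $\mathfrak{a}$ and $J$ are both generated by $q_1,\ldots,q_k$ (just over the smaller and larger coefficient rings respectively); hence for the $A$-module $M = S[\![q_1,\ldots,q_k]\!]/I$ we have $\mathfrak{a}^n M = \pi(J^n) = \pi(J)^n$ for all $n$. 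Therefore $\bigcap_{n\ge 1}\mathfrak{a}^n M = \bigcap_{n\ge 1}\pi(J)^n = (0)$ by \Cref{cor:separated-gen}. Applying \Cref{lemma:fgstacks} with the ring $A$ and ideal $\mathfrak{a}$ gives that $M$ is a finitely generated $A$-module.

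\textbf{Proof of \eqref{item:complete}.} Now that $M$ is a finitely generated module over the Noetherian ring $A$, it is in particular a Noetherian $A$-module, so the $\mathfrak{a}$-adic filtration is well-behaved. Since $A$ is Noetherian and $\mathfrak{a}$-adically complete, and $M$ is a finitely generated $A$-module, the standard fact that finitely generated modules over a complete Noetherian ring are complete in the adic topology (see \cite[Ch. 10]{AM:intro}, e.g. the discussion following Prop. 10.13, or \cite[\href{https://stacks.math.columbia.edu/tag/031D}{Tag 031D}]{StacksProj}) shows that $M$ is $\mathfrak{a}$-adically complete as an $A$-module. Equivalently one can argue directly: the $\mathfrak{a}$-adic completion $\widehat{M}$ is $M \otimes_A \widehat{A} = M \otimes_A A = M$ since $A$ is already complete and $M$ is finitely generated (so that completion is exact and commutes with tensor in this setting), and the separatedness from \eqref{item:fg} shows the canonical map $M \to \widehat{M}$ is injective, hence an isomorphism.

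\textbf{Main obstacle.} The steps are all bookkeeping with standard commutative algebra references; the one point that requires a moment's care is the identification $\mathfrak{a}^n M = \pi(J)^n$ used to import \Cref{cor:separated-gen} — one must be careful that $\mathfrak{a}$ is an ideal of the \emph{sub}ring $A$, not of $S[\![q_1,\ldots,q_k]\!]$, so that $\mathfrak{a}^n M$ a priori means the $A$-submodule of $M$ generated by products of $n$ elements of $\mathfrak{a}$ with elements of $M$; this coincides with $\pi(J^n)$ precisely because the $q_i$ lie in $\mathfrak{a}$ and generate $J$ over $S[\![q_1,\ldots,q_k]\!]$. Once this identification is in place, everything else follows formally from \Cref{lemma:fgstacks} and \Cref{cor:separated-gen}.
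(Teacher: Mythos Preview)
Your proof is correct and follows essentially the same approach as the paper: both establish separatedness via the identification $\mathfrak{a}^n M = \pi(J)^n$ and \Cref{cor:separated-gen}, apply \Cref{lemma:fgstacks} for part~\eqref{item:fg}, and then use $\widehat{M}\cong M\otimes_A \widehat{A}=M$ (together with finite generation over the complete Noetherian ring $A$) for part~\eqref{item:complete}. Your added remark that separatedness guarantees injectivity of $M\to\widehat{M}$ is a worthwhile clarification, but otherwise the arguments coincide.
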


\begin{proof} 
Note that $A$ is $\mathfrak{a}$-adically complete, see \cite[Section~7.1]{eisenbud:CAbook}, and that
\[
    \bigcap_{n \ge 1} \mathfrak{a}^nM = \bigcap_{n \ge 1} J^n M= \bigcap_{n \ge 1} \pi(J)^n =(0)
\] 
by \Cref{cor:separated-gen}.  Then part~\eqref{item:fg} follows from \Cref{lemma:fgstacks}. Since $A$ is $\mathfrak{a}$-dically complete, it follows from \cite[Proposition~10.13]{AM:intro} that the $\mathfrak{a}$-adic completion of $M$ is $\widehat{M} = M \otimes_A \widehat{A}=M$, proving part~\eqref{item:complete}.
\end{proof}

We now combine Propositions~\ref{prop:Nakiso} and~\ref{prop:KNak} to obtain the following.

\begin{cor}\label{prop:mainNak}
  Let $R$ be a Noetherian integral domain, $S$ a \emph{Noetherian}  $R$-algebra, and $I \subset S[\![q_1, \ldots, q_k]\!]$ an ideal of the formal power series ring $S[\![q_1, \ldots, q_k]\!]$. Assume that we have a homomorphism of\, $R[\![q_1, \ldots, q_k]\!]$-algebras
\[ f\colon S[\![q_1, \ldots, q_k]\!]/I \lra N \]
such that: 
 \begin{enumerate}
    \item\label{p:mN-1} $N$ is a free $R[\![q_1, \ldots, q_k]\!]$-module of finite rank $p<\infty$;
    \item\label{p:mN-2} $f$ induces an isomorphism of\, $R$-algebras 
    \[ \overline{f}\colon S[\![q_1, \ldots, q_k]\!]/(\langle q_1, \ldots, q_k \rangle +I) \lra N/\langle q_1, \ldots, q_k \rangle.\]
\end{enumerate}
Then $f$ is an isomorphism.
\end{cor}

\begin{proof}
Hypotheses~\eqref{p:mN-1} and~\eqref{p:mN-2} imply that $N/\langle q_1, \ldots, q_k \rangle$ and $S[\![q_1, \ldots, q_k]\!]/(\langle q_1, \ldots, q_k \rangle +I)$ are both free $R$-modules of rank $p$. Then apply \Cref{prop:Nakiso} to the case $A:=R[\![q_1, \ldots, q_k]\!]$ and $M:=S[\![q_1, \ldots, q_k]\!]/I$, and the homomorphism $f\colon M \to N$. By \Cref{prop:KNak}, $M$ is a finitely generated $A$-module, thus the hypotheses of \Cref{prop:Nakiso} hold, giving that $f$ is an isomorphism.
\end{proof}

As the example below shows, working with polynomial rings in $q$ 
is not sufficient in this corollary.

\begin{example}
Consider $R=\C$, $S=\C[x]$, $I=\langle (1-q) x \rangle \subset S[q]$, $M=S[\![q]\!]/I$, and $N=\C[\![q]\!]$.  Since $1-q$ is invertible in $S[\![q]\!]$, it follows that there is a $R[\![q]\!]$-algebra isomorphism
\[ \overline{f}\colon M \lra N, \quad x \longmapsto 0. \]
Furthermore, modulo $q$, this is again an isomorphism of $R$-algebras. However,
\[ S[q]/I = \C[x,q]/\langle (1-q)x \rangle \]
has zero divisors, so it is not isomorphic to \(R[q]=\C[q]\). For another example arising from the quantum K-ring of $\Fl(3)$, see \Cref{sec:example} below.\end{example}

\section{Equivariant quantum K-theory of flag varieties}\label{sec:preliminaries} 

\subsection{Preliminaries}
In this section we recall some basic facts about the equivariant K-theory of a variety with a group action. For an introduction to equivariant K-theory, and more details, see \cite{chriss2009representation}.

Let $X$ be a smooth projective variety with an action of a linear algebraic group $G$. The equivariant K-theory ring $\K_G(X)$ is the Grothendieck ring generated by symbols $[E]$, where $E \to X$ is a $G$-equivariant vector bundle, modulo the relations $[E]=[E']+[E'']$ for any short exact sequence $0 \to E' \to E \to E'' \to 0$ of equivariant vector bundles. The additive ring structure is given by direct sum, and the multiplication is given by tensor products of vector bundles.  Since $X$ is smooth, any $G$-linearized coherent sheaf has a finite resolution by (equivariant) vector bundles, and the ring $\K_G(X)$ coincides with the Grothendieck group of $G$-linearized coherent sheaves on $X$. In particular, any $G$-linearized coherent sheaf $\mathcal{F}$ on $X$ determines a class $[\mathcal{F}] \in \K_G(X)$.  As an important special case, if $\Omega \subset X$ is a $G$-stable subscheme, then its structure sheaf determines a class $[\cO_\Omega] \in \K_G(X)$.  We shall abuse notation and sometimes write \(\cF\) or $\cO_\Omega$ for the corresponding classes $[\cF]$ and $[\cO_\Omega]$ in $\K_G(X)$.

The ring $\K_G(X)$ is an algebra over $\K_G(\pt) = \Rep(G)$, the representation ring of $G$. If $G=T$ is a complex torus, then this is the Laurent polynomial ring $\K_T(\pt) = \Z[T_1^{\pm 1}, \ldots, T_n^{\pm 1}]$, where the $T_i:=\C_{t_i}$ are characters corresponding to a basis of the Lie algebra of $T$.

Let $E \to X$ be an equivariant vector bundle of rank $\mathrm{rk}\,E$. The (Hirzebruch) $\lambda_y$-class is defined as
\[ \lambda_y(E) := 1 + y E + \ldots + y^{\mathrm{rk}\,E} \wedge^{\mathrm{rk}\,E} E \quad \in \K_T(X)[y]. \] 
This class was introduced by Hirzebruch \cite{hirzebruch:topological} in relation to the Grothendieck--Riemann--Roch theorem.  The $\lambda_y$ class is multiplicative with respect to short exact sequences; \textit{i.e.} if $0 \to E' \to E \to E'' \to 0$ is such a sequence of vector bundles, then
\[ \lambda_y(E) = \lambda_y(E') \cdot \lambda_y(E''). \] 
This is part of the $\lambda$-ring structure of $\K_T(X)$, see \textit{e.g.} \cite{nielsen:diag}, referring to \cite{SGA6}. 

A particular case of this construction is when $V$ is a (complex) vector space with an action of a complex torus $T$. The $\lambda_y$-class of $V$ is the element $\lambda_y(V) =\sum_{i \ge 0} y^i \wedge^i V \in \K_T(\pt)[y]$. Since \(V\) decomposes into $1$-dimensional $T$-representations,  $V = \bigoplus_i \C_{\mu_i}$, it follows from the multiplicative property of $\lambda_y$-classes that \(\lambda_y(V) = \prod_i (1+y\,\C_{\mu_i})\).

Since $X$ is proper, we can push the class of a sheaf forward to the point. This is given by the sheaf Euler characteristic or, equivalently, the virtual representation
\[\chi_X^T(\mathcal{F}) 
:= \sum_i (-1)^i \HH^i(X, \mathcal{F})\quad\in \K_T(\pt). \] 

\subsection{(Equivariant) K-theory of homogeneous spaces}\label{sec:kflag} 
Let $G$ be a complex, reductive algebraic group, and fix $T:=B \cap B^- \subset B \subset P \subset G$: a standard parabolic subgroup $P$ containing a Borel subgroup $B$ and the maximal torus $T$ obtained as the intersection of $B$ with the opposite Borel subgroup $B^-$. Let $W:=N_G(T)/T$ denote the Weyl group, equipped with the length function $\ell\colon W \to \mathbb{Z}$, and denote by $W_P$ the subgroup generated by the simple reflections $s_i$ with representatives in $P$. The set of minimal-length representatives for the cosets of $W/W_P$ is denoted by $W^P$; thus $W^B=W$.

The (generalized) flag variety is by definition $X=G/P$, a homogeneous space with respect to the left action of $G$. It has a stratification by (opposite) Schubert cells:
\[ G/P = \bigsqcup_{w \in W^P} X_w^\circ = \bigsqcup_{w \in W^P} X^{w,\circ}, \]
where $X_w^\circ := BwP/P \simeq \C^{\ell(w)}$ and $X^{w, \circ}:= B^- w P/P \simeq \C^{\dim G/P - \ell(w)}$.  The closures of the Schubert cells are the Schubert varieties: $X_w:= \overline{X_w^\circ},\ X^w :=\overline{X^{w,\circ}}$. These are $T$-stable and determine classes $\cO_w:=[\cO_{X_w}]$ and \(\cO^w=[\cO_{X^w}]\) in $\K_T(X)$. These classes form a $\K_T(\pt)$-basis for $\K_T(X)$; \textit{i.e.}
\[ \K_T(X) = \bigoplus_{w \in W^P} \K_T(\pt) \cO_w =  \bigoplus_{w \in W^P} \K_T(\pt) \cO^w. \]
In particular, $\K_T(X)$ is a finitely generated $\K_T(\pt)$-algebra: one generating set is given by the Schubert classes. Alternatively, the finite generation also follows from the Atiyah--Hirzebruch isomorphism of $\K_T(\pt)$-modules
\[ \K_T(G/P) \simeq \K_T(\pt) \otimes_{\K_G(\pt)} \K_T(\pt)^{W_P}; \] 
see \textit{e.g.}~\cite[Sections~6.1 and 6.2]{chriss2009representation} or \cite[Appendix]{MNS:left}.

For $X=G/B$ one can show that the Schubert divisors already generate. We include below a proof, for the convenience of the reader, and also because we could not find a precise reference of this result, undoubtedly known to experts.

\begin{prop}\label{thm:main}
Assume \(G\) is simply connected. Then the ring $\K_T(G/B)$ is generated over $\K_T(\pt)$ by the Schubert divisors $\cO^{s_i}$, where $\{ s_i \}$ is the set of simple reflections in $W$. \end{prop}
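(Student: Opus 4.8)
The plan is to show that the subalgebra $R$ of $\K_T(G/B)$ generated over $\K_T(\pt)$ by the Schubert divisors $\cO^{s_i}$ exhausts $\K_T(G/B)$. I would argue by a filtration/induction on the Bruhat order, proving that every Schubert class $\cO^w$ lies in $R$; since the $\cO^w$ form a $\K_T(\pt)$-basis, this suffices. The engine should be the Chevalley-type formula in $\K_T(G/B)$: for a simple reflection $s_i$, the product $\cO^{s_i}\cdot \cO^w$ expands as a $\K_T(\pt)$-linear combination of classes $\cO^v$ with $v\geq w$ in Bruhat order, and $\cO^{ws_i}$ (when $\ell(ws_i)=\ell(w)+1$) appears with an invertible coefficient — in fact the expansion is $\cO^{s_i}\cdot\cO^w = c\,\cO^{ws_i} + (\text{terms }\cO^v,\ v>w,\ v\neq ws_i) + (\text{lower-order correction supported on }\cO^w)$. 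Rearranging, $\cO^{ws_i}$ is expressed in terms of $\cO^{s_i}\cdot\cO^w$ and Schubert classes indexed by strictly larger elements together with $\cO^w$ itself. Running this downward from the top class and inducting on $\ell(w)$ (base case $\cO^{\id} = [\cO_{G/B}] = 1 \in R$), one pulls every $\cO^w$ into $R$.

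The technical heart is making the coefficient of $\cO^{ws_i}$ invertible in $\K_T(\pt)=\Z[T_1^{\pm1},\dots,T_n^{\pm1}]$ so that the rearrangement is legitimate. Here is where I expect the real work. In cohomology the analogous Chevalley formula has coefficient $1$ and the argument is immediate; in K-theory the Pieri–Chevalley rule of Lenart–Postnikov / Pittie–Ram has coefficients that are explicit but can be sums of weights. The cleanest route is to pass to the associated graded with respect to the topological filtration (codimension filtration) on $\K_T(G/B)$: on $\gr \K_T(G/B)$ the divisor classes reduce to the cohomological Schubert divisors, where the Chevalley coefficients are $1$ (or a single character, hence a unit), and a standard filtered-to-graded argument lifts the generation statement back to $\K_T(G/B)$. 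Alternatively — and perhaps more in the spirit of this paper — one can localize at the $T$-fixed points and use the GKM/Atiyah–Hirzebruch description $\K_T(G/B)\simeq \K_T(\pt)\otimes_{\K_G(\pt)}\K_T(\pt)$, observing that the Schubert divisors $\cO^{s_i}$ correspond to the fundamental-weight line bundle classes, which generate $\K_T(\pt)$ over $\K_G(\pt)=\K_T(\pt)^W$ precisely because $G$ is simply connected (so the weight lattice is the full character lattice and the $\lambda_y$-type elements $1-e^{-\varpi_i}$ generate).

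Concretely, I would proceed as follows. First, record that $\cO^{s_i} = 1 - [\cL_{\varpi_i}^\vee]$ (up to a standard sign/twist convention) where $\cL_{\varpi_i}$ is the line bundle associated to the $i$-th fundamental weight; this identifies the subalgebra generated by Schubert divisors with the subalgebra generated by the classes of the $\cL_{\varpi_i}$. Second, under the isomorphism $\K_T(G/B)\simeq \K_T(\pt)\otimes_{\K_T(\pt)^W}\K_T(\pt)$, the second tensor factor $\K_T(\pt)$ is generated as a $\K_T(\pt)^W$-algebra by the $e^{\varpi_i}$ — here simple connectedness of $G$ ensures the $\varpi_i$ are actual characters of $T$ and span the character lattice — and the line bundle classes $[\cL_{\varpi_i}]$ map to $1\otimes e^{\varpi_i}$ up to units. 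Third, conclude that these classes, together with $\K_T(\pt)$, generate everything. The main obstacle is bookkeeping the precise normalization in the identification of $\cO^{s_i}$ with a line-bundle class and verifying that the resulting elements really do generate the second factor — i.e., that no proper subalgebra can occur — which is exactly the place where the simply-connected hypothesis is used and where one must be careful that "generated by $1-e^{-\varpi_i}$" is equivalent to "generated by $e^{\varpi_i}$" as algebras over $\K_T(\pt)^W$.
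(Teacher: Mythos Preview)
Your second route---identifying the subalgebra generated by the $\cO^{s_i}$ with the subalgebra generated by the fundamental line bundles $\cL_{\varpi_i}$, and then arguing via the description $\K_T(G/B)\simeq \K_T(\pt)\otimes_{\K_T(\pt)^W}\K_T(\pt)$---is the same architecture as the paper's proof. The paper also begins with the exact sequence expressing $\cO^{s_i}$ in terms of $\cL_{\omega_i}$, and then reduces to showing that the $\cL_{\pm\omega_i}$ all lie in the subring $R$, after which it invokes Chriss--Ginzburg that (for $G$ simply connected) the equivariant line bundles generate $\K_T(G/B)$. Where you differ is precisely at the point you flag as the obstacle: you stop at ``one must be careful that `generated by $1-e^{-\varpi_i}$' is equivalent to `generated by $e^{\varpi_i}$'\,'', i.e.\ you do not actually produce $\cL_{-\omega_i}$ inside $R$. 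That is the crux of the argument, and the paper handles it by a concrete geometric trick you do not mention: it embeds $G/B\hookrightarrow\bP(V)$ via the very ample $\cL_{-\rho}$, uses the K-theoretic Whitney relation on the tautological sequence of $\bP(V)$ to write $\cO_{\bP(V)}(1)$ as a $\K_T(\pt)$-polynomial in $\cO_{\bP(V)}(-1)$, pulls back to get $\cL_{-\rho}\in R$, and then factors $\cL_{-\omega_i}=\cL_{-\rho}\otimes\prod_{j\ne i}\cL_{\omega_j}$. Your sketch could be completed by a different (purely algebraic) device---e.g.\ observing that $\prod_{w\in W}(x-e^{w\varpi_i})$ has coefficients in $\K_T(\pt)^W$ and constant term $\pm e^{\sum_w w\varpi_i}=\pm 1$, so $e^{-\varpi_i}$ is a polynomial in $e^{\varpi_i}$ over $\K_T(\pt)^W$---but as written the proposal leaves this step open.

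Your first route (Chevalley formula plus passage to the associated graded for the codimension filtration) is more problematic in the equivariant setting. The coefficient of $\cO^{ws_i}$ in the equivariant K-theoretic Chevalley formula is \emph{not} a unit in $\K_T(\pt)$ in general, so the direct induction does not run; and the filtered-to-graded lift is delicate because $\gr\K_T(\pt)$ is $\K_T(\pt)$ itself (concentrated in degree $0$), not $\HH^*_T(\pt)$, so identifying $\gr\K_T(G/B)$ with equivariant cohomology as $\K_T(\pt)$-algebras, as you would need, requires justification you have not supplied. I would drop this line and complete the line-bundle argument instead.
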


In order to prove this, we introduce more notation. Let $X^*(T)$ be the group of characters of $T$, written in the additive notation. For $\lambda \in X^*(T)$ we denote by $\C_\lambda$ the $1$-dimensional $T$-representation with character~$\lambda$. One may extend this to a $B$-representation by letting the unipotent group $U$ act trivially. Define the $G$-equivariant line bundle $\mathcal{L}_\lambda$ over $G/B$ by
\[ \mathcal{L}_\lambda = G \times^B \C_\lambda. \]
Denote by $\omega_i$ the fundamental weights, and set $\rho = \sum_i \omega_i$. There is an exact sequence
\begin{equation}\label{E:exact}
  0 \lra \mathcal{L}_{\omega_i} \otimes \C_{-\omega_i} \lra \cO_{G/B} \lra \cO^{s_i} \lra 0. \end{equation} 
A consequence of this exact sequence is that $\cO^{s_i}$ and the line bundles $\mathcal{L}_{\omega_i}$ generate the same subring of $\K_T(G/B)$.  Since the line bundle $\mathcal{L}_{-\rho}$ is very ample, there is a $T$-equivariant embedding
\[ \iota\colon G/B \longhookrightarrow \mathbb{P}(V) = \mathbb{P}\left(H^0\left(G/B; \mathcal{L}_{-\rho}\right)\right) \]
such that $\iota^*(\cO_{\bP(V)}(1)) = \mathcal{L}_{-\rho}$.

\begin{lemma}\label{lemma:O1}
The class of\, $\cO_{\bP(V)}(1) \in \K_T(\bP(V))$ may be written as a polynomial in the class $\cO_{\bP(V)}(-1)$ with coefficients in $\K_T(\pt)$.\end{lemma}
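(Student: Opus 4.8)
The plan is to exploit the standard presentation of the equivariant K-theory of a projective space. Recall that for a $T$-representation $V$ of dimension $N$, one has
\[
\K_T(\bP(V)) = \K_T(\pt)[\xi^{\pm 1}] \Big/ \Big\langle \textstyle\prod_i (1 - \C_{-\mu_i}\,\xi) \Big\rangle,
\]
where $\xi = [\cO_{\bP(V)}(-1)]$ is the class of the tautological subbundle, and $\C_{\mu_1}, \ldots, \C_{\mu_N}$ are the weights of $V$. (The precise signs and twists depend on conventions, but the shape is what matters.) The class $\eta := [\cO_{\bP(V)}(1)]$ is the inverse of $\xi$ in this ring, so a priori it is only a Laurent polynomial in $\xi$; the content of the lemma is that the $\xi^{-1}$ can be cleared using the defining relation.

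The key step is therefore purely algebraic: in the ring $\K_T(\bP(V))$ the relation $\prod_i (1 - \C_{-\mu_i}\,\xi) = 0$ expands to $1 + c_1 \xi + \cdots + c_N \xi^N = 0$ with $c_j \in \K_T(\pt)$ and $c_N = (-1)^N \C_{-(\mu_1 + \cdots + \mu_N)}$ a unit. Multiplying through by $\xi^{-1} = \eta$ gives $\eta = -c_1 - c_2 \xi - \cdots - c_N \xi^{N-1}$, which is visibly a polynomial in $\xi = [\cO_{\bP(V)}(-1)]$ with coefficients in $\K_T(\pt)$. First I would recall (or cite, e.g.\ from \cite{chriss2009representation}) the presentation above together with the identification $\eta = \xi^{-1}$; then I would write out the relation, isolate the constant term (which equals $1$), and solve for $\eta$ as indicated. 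In the application $V = \HH^0(G/B;\mathcal{L}_{-\rho})$ with its induced $T$-action, so the weights $\mu_i$ are explicit, but no feature of them beyond $T$-equivariance is needed.

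There is essentially no obstacle here: the only thing to be a little careful about is matching conventions, namely verifying that the constant term of the expanded relation is a unit (indeed $1$) in $\K_T(\pt)$ so that dividing by $\xi$ stays inside the polynomial subring, rather than producing new negative powers. This is automatic because $\prod_i(1 - \C_{-\mu_i}\,\xi)$ has constant term $1$. If one prefers a coordinate-free argument, one can instead invoke the Euler sequence $0 \to \cO(-1) \to V \otimes \cO \to Q \to 0$ on $\bP(V)$: taking $\lambda_{-1}$-classes (or simply using $[\cO(-1)]\cdot[Q] = [V\otimes\cO] - 1$ together with a resolution of $\cO(1)$ as a Koszul-type complex built from $\cO(-1)$ and $V$) gives the same polynomial expression for $[\cO(1)]$ directly. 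I would present the quotient-ring computation as the main line and mention the Euler-sequence viewpoint as a remark.
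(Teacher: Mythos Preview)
Your proposal is correct and in substance equivalent to the paper's argument: both exploit the degree-$N$ relation satisfied by $\xi=[\cO_{\bP(V)}(-1)]$ and solve it for $\xi^{-1}$. The packaging differs slightly. You take as input the standard presentation of $\K_T(\bP(V))$ and read off $\eta=-c_1-c_2\xi-\cdots-c_N\xi^{N-1}$ directly; the paper instead works from the tautological sequence $0\to\cO(-1)\to V\to\cQ\to 0$, uses the Whitney relation $\lambda_y(\cO(-1))\cdot\lambda_y(\cQ)=\lambda_y(V)$ to express $\det\cQ$ as a polynomial in $\xi$, and then invokes $\cO(1)=(\det V)^{-1}\det\cQ$. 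This is exactly the Euler-sequence variant you mention at the end as a remark. The trade-off is that the paper's route is self-contained (no external citation for the presentation of $\K_T(\bP(V))$ is needed), while your main line is shorter once that presentation is granted.
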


\begin{proof}
On $\bP(V)$ there is the tautological sequence of $T$-equivariant vector bundles
\[ 0 \lra \cO_{\bP(V)}(-1) \lra V \lra \cQ:=V/\cO_{\bP(V)}(-1) \lra 0. \]
The $\K$-theoretic Whitney relations are 
\[ \lambda_y(\cO_{\bP(V)}(-1)) \cdot \lambda_y(\cQ) = \lambda_y(V) = 1 + y V + y^2 \wedge^2 V + \cdots + y^N \wedge^N V, \]
where $N= \dim V$. Multiplying both sides by \(\sum_{i\geq 0}(-1)^iy^i(\cO_{\bP(V)}(-1))^i\) and extracting the coefficient of \(y^{N-1}\), one obtains that
\[ \det \cQ = \sum_{i=0}^{N-1} (-1)^i \wedge^{N-i-1} V  \otimes \left(\cO_{\bP(V)}(-1)\right)^i. \]
But $\cO_{\bP(V)}(-1) \otimes \det \cQ = \det V$; therefore, 
\begin{equation}\label{E:O1} \cO_{\bP(V)}(1) = \left(\cO_{\bP(V)}(-1)\right)^{-1} = \det \cQ \otimes (\det V)^{-1}.
\end{equation}
Finally, since $(\det V)^{-1} \in \K_T(\pt)$, the right-hand side of~\eqref{E:O1} has the claimed properties.
\end{proof}
 
\begin{proof}[Proof of \Cref{thm:main}]
Let $R$ be the subring of $\K_T(G/B)$ generated by the Schubert divisors. Observe that $\iota^*\cO_{\bP(V)}( \pm 1) = \mathcal{L}_{\mp \rho}$. From \Cref{lemma:O1} it follows that $\mathcal{L}_{-\rho}$ is a polynomial in powers of $\mathcal{L}_\rho$ with coefficients in $\K_T(\pt)$; thus $\mathcal{L}_{-\rho} \in R$. Then for each fundamental weight $\omega_i$,
\[ \mathcal{L}_{-\omega_i} = \mathcal{L}_{-\rho} \otimes \mathcal{L}_{\omega_1} \otimes \cdots \otimes \widehat{\mathcal{L}_{\omega_i}} 
\otimes \cdots \otimes \mathcal{L}_{\omega_{\mathrm{rank}~G}} \] is an element of $R$. Since every line bundle is generated by powers of $\mathcal{L}_{\pm \omega_i}$, this shows that $R$ is equal to the subring generated over $\K_T(\pt)$ by all  $G$-equivariant line bundles. Our initial hypothesis that $G$ is simply connected implies that these line bundles generate the whole $\K_T(G/B)$; see \textit{e.g.}~\cite[Lemma~6.1.6 and Theorem~6.1.22]{chriss2009representation}.\begin{footnote}{Note that this is the only place where the simply connected hypothesis is needed.}\end{footnote} This finishes the proof.
\end{proof}

\begin{remark}
In general the Schubert divisors do not generate the $\K$-theory ring of $G/P$.  For example, in $\K(\Gr(2,4))$, the minimal polynomial of the multiplication by the Schubert divisor has degree $5$, while the K-theory module has rank $6$. However, it can be shown that the Schubert divisor classes generate an appropriate localization of $\K_T(G/P)$; \textit{cf.} \cite[Section~5.3]{BCMP:qkchev}.\end{remark}

\subsection{Quantum K-theory}
We next recall  the definition of the equivariant quantum K-ring of a partial flag variety.  An effective degree is a \(k\)-tuple of nonnegative integers \(\mathbf{d}=(d_1,\dots,d_k)\), which is identified with \(\sum_{i=1}^{k}d_i[X_{s_{r_i}}]\in\HH_2(X,\Z)\).  We write \(\mathbf{q}^\mathbf{d}\) for \(q_1^{d_1}\dots q_k^{d_k}\), where $\mathbf{q}=(q_1, \ldots, q_k)$ is a sequence of quantum parameters.

We recall the definition of the \(T\)-equivariant (small) quantum K-theory ring $\QK_T(X)$, following \cite{givental:onwdvv,lee:QK}. Additively,
\begin{equation*}\label{eqn:QKTfree}
\QK_T(X) = \K_T(X) \otimes_{\K_T(\pt)} \K_T(\pt)[\![\mathbf q]\!] 
\end{equation*}
is a free $\K_T(\pt)[\![\mathbf q]\!]$-module with a $\K_T(\pt)[\![\mathbf q]\!]$-basis given by Schubert classes $\{ \cO^w \}_{w \in W^P}$.  It is equipped with a commutative, associative product, denoted by $\star$, and determined by the condition
\begin{equation}\label{eqn:def}
    (\!(\sigma_1\star\sigma_2,\sigma_3)\!)=\sum_d
        \mathbf{q}^\mathbf{d} \egw{\sigma_1,\sigma_2,\sigma_3}{d}
\end{equation}
for all \(\sigma_1,\sigma_2,\sigma_3\in \K_T(X)\), where  
\[
        (\!(\sigma_1,\sigma_2)\!)\coloneqq \sum_{d}\mathbf{q}^\mathbf{d} \egw{\sigma_1,\sigma_2}{d}
\]
is the quantum \(\K\)-metric and \(\egw{\sigma_1,\dots,\sigma_n}{d}\) are \(T\)-equivariant \(\K\)-theoretic Gromov--Witten invariants, recalled next. Let \(d\in \HH_2(X,\Z)_+\) be an effective degree, and let \(\Mb_{0,n}(X,d)\) be the Kontsevich moduli space parametrizing \(n\)-pointed, genus \(0\), degree \(d\) stable maps to \(X\). Denote by
\[
\ev_1, \ev_2, \dots, \ev_n\colon \Mb_{0,n}(X,d)\lra X
\] 
the evaluations at the \(n\) marked points. 
Given \(\sigma_1, \sigma_2,\dots, \sigma_n\in \K_T(X)\), the \(T\)-equivariant \(\K\)-theoretic Gromov--Witten invariant is defined by \[
\egw{\sigma_1,\sigma_2,\dots,\sigma_n}{d}\coloneqq\chi_{\M_{0,n}(X,d)}^T(\ev_1^*\sigma_1\cdot\ev_2^*\sigma_2\cdots\ev_n^*\sigma_n),
\] 
where \(\chi_Y^T\colon \K_T(Y)\to \K_T(\pt)\) is the pushforward to a point. We adopt the convention that when \(d\) is not effective, the invariant \(\egw{\sigma_1,\sigma_2,\dots,\sigma_n}{d}\) is zero.

\begin{remark}
For $1 \le j \le k$ set $\deg q_{j} = \deg ([X_{s_{r_j}}] \cap c_1(T_X)) = r_{j+1}- r_{j-1}$.  For a multidegree $d = (d_{r_1}, \ldots, d_{r_k})$, set $\deg(\cO^w \otimes \mathbf{q}^\mathbf{d}) = \ell(w) + \sum \deg(q_i) \cdot d_{r_i}$.  Together with the topological filtration on $\K_T(X)$, this equips $\QK_T(X)$ with the structure of a filtered ring; see \cite[Section~5.1]{buch.m:qk}. The associated graded of this ring is $\QH^*_T(X)$, the (small) $T$-equivariant quantum cohomology of $X$, a free $\HH^*_T(\pt)[\mathbf q]$-algebra of the same rank as $\QK_T(X)$. \end{remark} 

\section{Relations of the quantum K-theory ring}
As above, we let $X:=G/P$. Our goal in this section is to prove \Cref{thm:qktfg}, giving the K-theoretic version of Siebert and Tian's results in quantum cohomology.

Denote by $\mathbf{q}=(q_1, \ldots, q_k)$ the sequence of quantum parameters associated to $X$.

\begin{thm}\label{thm:qktfg}
Assume that we are given an isomorphism of\, $\K_T(\pt)$-algebras
\[ \Phi\colon \K_T(\pt)[m_1, \ldots, m_s]/\langle P_1, \ldots , P_r \rangle \lra \K_T(X), \]
where $m_1, \ldots, m_s$ are indeterminates, and the $P_i$ are polynomials in the variables $m_j$ with coefficients in $\K_T(\pt)$.  Consider any power series
\[ \tilde{P}_i =\tilde{P}_i(m_1, \ldots, m_s;q_1, \ldots, q_k) \in \K_T(pt)[m_1, \ldots, m_s][\![q_1, \ldots, q_k]\!] \]
such that:
\begin{itemize}
\item  $\tilde{P}_i(m_1, \ldots, m_s; 0, \ldots, 0) = P_i(m_1,\ldots, m_s)$ for all $m_1, \ldots, m_s$ and all $i$, and 
\item $\tilde{P}_i(m_1, \ldots, m_s;q_1, \ldots, q_k)=0$ in $\QK_T(X)$, 
where the $m_i$ are regarded as elements in $\QK_T(X)$ via the isomorphism $\Phi$.
\end{itemize}
Then $\Phi$ lifts to an isomorphism of\, $\K_T(\pt)[\![\mathbf{q}]\!]$-algebras
\[ \widetilde{\Phi}\colon  \K_T(\pt)[\![\mathbf{q}]\!][m_1, \ldots, m_s]/\langle \tilde{P}_1, \ldots , \tilde{P}_r \rangle \lra \QK_T(X).\]
\end{thm}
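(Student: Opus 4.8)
The plan is to deduce \Cref{thm:qktfg} from \Cref{prop:mainNak} by setting up the three data $R$, $S$, $I$ and the map $f$ appropriately. First I would take $R := \K_T(\pt) = \Z[T_1^{\pm 1}, \ldots, T_n^{\pm 1}]$, which is a Noetherian integral domain, and $S := \K_T(\pt)[m_1, \ldots, m_s]$, a Noetherian $R$-algebra. The ideal $I \subset S[\![q_1, \ldots, q_k]\!]$ is $I := \langle \tilde P_1, \ldots, \tilde P_r \rangle$, so that $M := S[\![\mathbf q]\!]/I$ is exactly the source of the desired $\widetilde\Phi$. The target is $N := \QK_T(X)$, which by the additive description $\QK_T(X) = \K_T(X) \otimes_{\K_T(\pt)} \K_T(\pt)[\![\mathbf q]\!]$ is a free $R[\![\mathbf q]\!]$-module of finite rank $p = |W^P|$, with basis the Schubert classes $\{\cO^w\}$; this verifies hypothesis (1) of \Cref{prop:mainNak}.

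Next I would construct the $R[\![\mathbf q]\!]$-algebra homomorphism $f : M \to N$. The assignment $m_j \mapsto \Phi(m_j) \in \K_T(X) \subset \QK_T(X)$ extends uniquely to an $R[\![\mathbf q]\!]$-algebra homomorphism $S[\![\mathbf q]\!] \to \QK_T(X)$ (using that $\QK_T(X)$ is $\langle \mathbf q \rangle$-adically complete, so power series in the $q_i$ make sense), and the second bullet hypothesis — that each $\tilde P_i$ vanishes in $\QK_T(X)$ — guarantees this homomorphism kills $I$, hence descends to $f : M \to N$. For hypothesis (2) of \Cref{prop:mainNak}, I need the induced map modulo $\langle \mathbf q \rangle$ to be an isomorphism of $R$-algebras. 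Setting all $q_i = 0$: the source becomes $S/(\langle \mathbf q \rangle + I) \cdot(\ldots) = \K_T(\pt)[m_1, \ldots, m_s]/\langle \tilde P_1(-;0), \ldots, \tilde P_r(-;0)\rangle = \K_T(\pt)[m_1, \ldots, m_s]/\langle P_1, \ldots, P_r\rangle$, using the first bullet hypothesis $\tilde P_i(m; 0) = P_i(m)$; the target becomes $N/\langle \mathbf q \rangle = \K_T(X)$; and the induced map is precisely the given isomorphism $\Phi$. So $\overline f = \Phi$ is an isomorphism, and \Cref{prop:mainNak} applies directly to conclude that $f = \widetilde\Phi$ is an isomorphism.

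The one genuine subtlety — and the step I expect to require the most care — is the appearance of the \emph{completed} coefficient ring $\K_T(\pt)[\![\mathbf q]\!]$ rather than the polynomial ring $\K_T(\pt)[\mathbf q]$ on the source side of $\widetilde\Phi$. The hypothesis provides the $\tilde P_i$ only as elements of $S[\![\mathbf q]\!]$ and only asserts their vanishing in $\QK_T(X)$, which is itself a ring of power series in $\mathbf q$; there is no reason the relations should be polynomial in $\mathbf q$, nor that the corresponding polynomial quotient be well-behaved. The \cref{sec:example} example with $R = \C$, $S = \C[x]$, $I = \langle (1-q)x \rangle$ shows concretely that $S[q]/I$ can have zero-divisors and fail to be isomorphic to $N$ even when the completed quotient is. This is exactly why \Cref{prop:mainNak} was phrased over power series rings, and why the machinery of \Cref{sec:completions} (module-finiteness via \Cref{prop:KNak}, then the completed Nakayama \Cref{prop:Nakiso}) is needed rather than a naive graded Nakayama argument. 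With that framework in place, the remaining verifications are the bookkeeping identifications above and are routine.
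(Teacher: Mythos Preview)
Your proposal is correct and follows essentially the same approach as the paper: set $R=\K_T(\pt)$, $S=R[m_1,\ldots,m_s]$, $I=\langle \tilde P_1,\ldots,\tilde P_r\rangle$, $N=\QK_T(X)$, and invoke \Cref{prop:mainNak} after checking that $N$ is free of finite rank over $R[\![\mathbf q]\!]$ and that the induced map modulo $\langle\mathbf q\rangle$ recovers the given isomorphism $\Phi$. Your added remarks on the necessity of completions and on the $\langle\mathbf q\rangle$-adic completeness of $\QK_T(X)$ (needed to extend $m_j\mapsto\Phi(m_j)$ to all of $S[\![\mathbf q]\!]$) are accurate elaborations of points the paper leaves implicit.
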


\begin{proof}
We use \Cref{prop:mainNak} applied to $N= \QK_T(X)$, the Noetherian integral domains $R=\K_T(\pt)$ and $S=R[m_1, \ldots, m_s]$, and $I$ the ideal $\langle \tilde{P}_1, \ldots , \tilde{P}_r \rangle \subset S[\![q_1, \ldots, q_k]\!]$.  By the hypothesis on the $\tilde{P}_i$, $\widetilde{\Phi}$ is a well-defined $R[\![q_1, \ldots, q_k]\!]$-algebra homomorphism, and the induced homomorphism modulo $\langle q_1, \ldots, q_k\rangle$ is an isomorphism.  Finally, $N$ is a finitely generated free module over $R[\![q_1, \ldots, q_k]\!]$, from the definition of the quantum K-ring.  Then all hypotheses of \Cref{prop:mainNak} are satisfied; thus $\widetilde{\Phi}$ is an isomorphism, as claimed. \end{proof}

As an aside, we also record the following lemma. Together with \Cref{thm:main}, it implies that the Schubert divisors generate $\QK_T(G/B)$ as an algebra over $\K_T(\pt)[\![\mathbf{q}]\!]$. A Toda-type presentation for $\QK_T(\Fl(n))$ with closely related generators has been obtained by Maeno, Naito, and Sagaki \cite{maeno.naito.sagaki:QKideal} -- see also \cite{lenart.maeno:quantum} and the next section.

\begin{lemma}\label{lemma:QKT-fgalgebra}
Assume that $m_1, \ldots, m_s$ generate $\K_T(X)$ as an algebra over $\K_T(pt)$. Then $m_1, \ldots, m_s$ also generate $\QK_T(X)=\K_T(X) \otimes_{\K_T(\pt)} \K_T(pt)[\![\mathbf{q}]\!]$ as an algebra over $\K_T(pt)[\![\mathbf{q}]\!]$.\end{lemma}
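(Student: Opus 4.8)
The statement is that if $m_1, \ldots, m_s$ generate $\K_T(X)$ as a $\K_T(\pt)$-algebra, then the same elements (regarded in $\QK_T(X)$) generate $\QK_T(X)$ as a $\K_T(\pt)\llbracket\mathbf{q}\rrbracket$-algebra. The plan is to argue by a successive-approximation / $\mathbf{q}$-adic completeness argument. Let $R' := \K_T(\pt)\llbracket\mathbf{q}\rrbracket$, let $B \subseteq \QK_T(X)$ be the $R'$-subalgebra generated by $m_1, \ldots, m_s$, and let $\mathfrak{m} = \langle q_1, \ldots, q_k\rangle$. Since $\QK_T(X) = \K_T(X)\otimes_{\K_T(\pt)} R'$ is a finitely generated free $R'$-module, and $\QK_T(X)$ is $\mathfrak{m}$-adically complete and separated (it is free of finite rank over the $\mathfrak{m}$-adically complete ring $R'$), the key mechanism is that $B = \QK_T(X)$ can be checked modulo $\mathfrak{m}$ together with a completeness argument.

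First I would record that modulo $\mathfrak{m}$ the product $\star$ reduces to the classical product on $\K_T(X)$ — this is immediate from the defining formula \eqref{eqn:def}, since the $d=0$ term of a Gromov–Witten invariant recovers the classical K-theory pairing, so $\QK_T(X)/\mathfrak{m}\,\QK_T(X) \cong \K_T(X)$ as $\K_T(\pt)$-algebras, and under this identification the images of $m_1, \ldots, m_s$ are the original generators. Hence $B + \mathfrak{m}\,\QK_T(X) = \QK_T(X)$, i.e. $B$ surjects onto $\QK_T(X)/\mathfrak{m}\,\QK_T(X)$. Then I would bootstrap: given any $\eta \in \QK_T(X)$, write $\eta = b_0 + \eta_1$ with $b_0 \in B$ and $\eta_1 \in \mathfrak{m}\,\QK_T(X)$; expanding $\eta_1$ as an $R'$-combination of products $q_j \cdot (\text{basis element})$ and again approximating each basis element by something in $B$ modulo $\mathfrak{m}$, one gets $\eta \equiv b_1 \pmod{\mathfrak{m}^2}$ with $b_1 \in B$; iterating gives a Cauchy sequence $b_n \in B$ with $\eta - b_n \in \mathfrak{m}^{n+1}\,\QK_T(X)$. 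By $\mathfrak{m}$-adic completeness and separatedness of $\QK_T(X)$ this sequence converges to $\eta$; so it remains to see the limit lies in $B$. For this I would note that $B$, being a finitely generated $R'$-module (it is a submodule of the Noetherian module $\QK_T(X)$ over the Noetherian ring $R'$ — here $R'$ is Noetherian by \cite[Cor. 10.27]{AM:intro}), is itself $\mathfrak{m}$-adically complete and closed in $\QK_T(X)$ (again by \cite[Prop. 10.13, Cor. 10.19]{AM:intro}, using that the $\mathfrak{m}$-adic topology on $B$ is the subspace topology). Hence $\eta \in B$, proving $B = \QK_T(X)$.

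Alternatively — and perhaps more cleanly — I would phrase the closedness step so as to avoid checking that subspace and $\mathfrak{m}$-adic topologies agree on $B$: let $B' := R'[m_1,\ldots,m_s]^{\wedge}$ be the $\mathfrak{m}$-adic completion of the (abstract) polynomial $R'$-algebra, map it to $\QK_T(X)$ by the evident evaluation, observe the map is well defined by completeness of the target, and note its image contains $B$ and is $\mathfrak{m}$-adically dense; since this image is a finitely generated $R'$-module (as $\K_T(\pt)[m_1,\ldots,m_s]/(\text{classical relations}) \cong \K_T(X)$ is module-finite, so its completion is too — cf. \Cref{prop:KNak}) it is closed, hence equals $\QK_T(X)$.

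The main obstacle is purely the topological bookkeeping: one must be careful that the $\mathfrak{m}$-adic filtration behaves well on the subalgebra $B$ and that approximations in $B$ actually converge inside $B$, not merely inside $\QK_T(X)$. Everything reduces to the combination of (i) finite generation of $\QK_T(X)$ as a free $R'$-module, (ii) Noetherianity of $R'$, and (iii) the classical fact that finitely generated modules over a Noetherian $\mathfrak{m}$-adically complete ring are complete and that their submodules are closed — all of which are already invoked elsewhere in the paper. No genuinely hard input is needed beyond what is used for \Cref{prop:KNak}.
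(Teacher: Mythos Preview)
Your argument is correct, but the paper takes a shorter route. Instead of running an explicit successive-approximation scheme and then checking that the subalgebra $B$ is closed in the $\mathfrak{m}$-adic topology, the paper applies the ordinary Nakayama lemma \cite[Prop.~2.6]{AM:intro} directly to the quotient module $M/M'$, where $M=\QK_T(X)$ and $M'=B$ is the $R[\![\mathbf{q}]\!]$-subalgebra generated by $m_1,\ldots,m_s$. Since $M$ is finitely generated over $R[\![\mathbf{q}]\!]$, so is $M/M'$; the hypothesis gives $M/M'=\mathfrak{m}\,(M/M')$; and $\mathfrak{m}\subset\Jac(R[\![\mathbf{q}]\!])$ by \cite[Prop.~10.15]{AM:intro}. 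Nakayama then yields $M/M'=0$ in one stroke.

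What you do differently is essentially to unwind this Nakayama step by hand: your density statement $B+\mathfrak{m}\,\QK_T(X)=\QK_T(X)$ is exactly the input $M/M'=\mathfrak{m}\,(M/M')$, and your closedness argument (via Noetherianity, Artin--Rees to match subspace and $\mathfrak{m}$-adic topologies on $B$, and completeness of finitely generated modules) reproves the conclusion. Your approach has the virtue of being constructive---it actually produces the approximating sequence---but it requires more topological bookkeeping (which you handled correctly). The paper's one-line application of Nakayama is cleaner for this purpose.
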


\begin{proof}
The argument follows from \cite[Lemma A.1]{gu2022quantum}.  Set $R:=\K_T(\pt)$, $M:=\QK_T(X)$, and let $I$ be the ideal in $R[\![\mathbf{q}]\!]$ generated by $q_1, \ldots, q_k$. Denote by $M'$ the $R[\![\mathbf{q}]\!]$-submodule of $M$ generated by $m_1, \ldots, m_k$.  Note that $R[\![\mathbf{q}]\!]$ is Noetherian, and $I$-adically complete, so $I$ is included in the Jacobson radical of $R[\![\mathbf{q}]\!]$; see \cite[Proposition~10.15]{AM:intro}.  The hypothesis in the lemma implies that $M/M'= I\cdot(M/M')$ as $R[\![\mathbf{q}]\!]$-modules.  Since $M$ is a finitely generated over $R[\![\mathbf{q}]\!]$, the claim follows from the usual Nakayama lemma, as stated \textit{e.g.}~in \cite[Proposition~2.6]{AM:intro}.
\end{proof}

\begin{remark}
Given the polynomials $P_i=P_i(m_1, \ldots, m_s)$ from \Cref{thm:qktfg}, one way to construct the power series $\tilde{P}_i$ is as follows.  First, observe that there is a surjective $\K_T(\pt)[\![q]\!]$-algebra homomorphism
\[ \Psi\colon \K_T(\pt)[\![\mathbf{q}]\!][m_1, \ldots, m_s] \lra \QK_T(X).\]
Let $G_u \in \K_T(\pt)[\![\mathbf{q}]\!][m_1, \ldots, m_s]$ be any preimage of the Schubert class $\cO_u$ under $\Psi$, for $u$ varying in the appropriate subset of the Weyl group. Regard $P_i$ as an element in $\K_T(\pt)[\![\mathbf{q}]\!][m_1, \ldots, m_s]$.  The hypothesis that $P_i$ gives a relation in $\K_T(X)$ implies that
\[ \Psi(P_i) = \sum_u a_u \cO_u, \]
where $a_u \in \K_T(\pt)[\![\mathbf{q}]\!]$ and $q_i | a_u$ for some index $i$. Define
\[ \tilde{P}_i:= P_i - \sum a_u G_u \quad \in \K_T(\pt)[\![\mathbf{q}]\!][m_1, \ldots, m_s]. \]
Then $\tilde{P}_i$ is in $\Ker(\Psi)$ and  satisfies the two conditions in \Cref{thm:qktfg}.  Note that the same proof holds in the more general case when $\K_T(X)$ has a finite $\K_T(\pt)$-basis.
\end{remark}

\section{The Whitney presentation of the quantum K-theory of partial flag manifolds}\label{sec:QKWhitney}
In this section we illustrate our method and prove a presentation of the equivariant quantum K-ring of the partial flag manifolds $X=\Fl(r_1, \ldots, r_k; \C^n)$.  To this aim, we first prove a (folklore) presentation of $\K_T(X)$, and then we recall a conjecture from \cite{gu2023quantum} giving a quantization of the generators of the ideal of classical relations; see \Cref{conj2:lambda_y}.~The conjectured quantizations were recently proved by Huq-Kuruvilla in \cite{huq2024quantum}, and by \Cref{thm:qktfg} they generate the full ideal of quantum K-relations.
 
For Grassmannians, the presentation resulting from the relations in \Cref{conj2:lambda_y} was proved in \cite{gu2022quantum}, and for the incidence varieties $\Fl(1,n-1;n)$, it was proved in an earlier ar$\chi$iv version of this paper \cite{GMSXZZ:QKW}.  We note that for $\QK_T(\Fl(n))$ we provided in \cite{GMSXZZ:QKW} a different proof of the quantum K\nobreakdash-relations, contingent on the `K-theoretic divisor axiom' -- a conjectural statement by Buch and Mihalcea which calculates the $3$-point K-theoretic GW invariants of the form $\egw{\cO^{s_i},\cO_u,\cO^v}{d}$.

For similar results in the case of the symplectic flag manifold $\Sp(2n)/B$, see \cite{kouno.naito:C}.

\subsection{The (classical) Whitney presentation}
Let $0=\cS_{0}\subset\cS_{1}\subset\dots \subset \cS_{k}\subset\cS_{{k+1}}=\C^n$ be the flag of tautological vector bundles on $X$, where \(\cS_{j}\) has rank \(r_j\).  Since we could not find a precise reference, we will take the opportunity to outline a proof for a (folklore) presentation by generators and relations of $\K_T(X)$. The relations
\begin{equation*}
    \lambda_y(\cS_{j})\cdot\lambda_y(\cS_{{j+1}}/\cS_{j})=\lambda_y(\cS_{{j+1}}),\quad j=1,\dots,k
\end{equation*}
arise from the Whitney relations applied to the exact sequences
\[
    0 \lra \cS_{{j}} \lra \cS_{j+1} \lra \cS_{j+1}/\cS_{{j}} \lra 0, \quad j=1,\dots,k .
\] 
We use the Whitney relations to obtain a presentation closely related to well-known (nonequivariant) presentations such as those in \cite[Section~7]{lascoux:anneau}.  Let
\[
    {X}^{(j)}=\left(X^{(j)}_1,\dots,X^{(j)}_{r_j}\right)\quad\text{and}\quad{Y}^{(j)}=\left(Y^{(j)}_1,\dots, {Y^{(j)}_{r_{j+1}-r_j}}\right)
\] 
denote formal variables for \(j=1,\dots,k\).  Let \(X^{(k+1)}\coloneqq (T_1,\dots,T_n)\) be the equivariant parameters in \(\K^T(\pt)\). Geometrically, the variables $X_i^{(j)}$ and $Y_s^{(j)}$ arise from the splitting principle:
\[ \lambda_y(\cS_j) = \prod_i \left(1+y X_i^{(j)}\right), \quad  \lambda_y(\cS_{j+1}/\cS_{j}) = \prod_s \left(1+y Y_s^{(j)}\right); \]
\textit{i.e.} they are the K-theoretic Chern roots of $\cS_{j}$ and $\cS_{j+1}/\cS_{{j}}$, respectively. Let \(e_\ell({X}^{(j)})\) and \(e_\ell({Y}^{(j)})\) be the \(\supth{\ell}\) elementary symmetric polynomials in \({X}^{(j)}\) and \({Y}^{(j)}\), respectively.  Denote by $S$ the (Laurent) polynomial ring
\[ 
 S\coloneqq\K_T(\pt)\left[e_i\left(X^{(j)}\right), e_s\left(Y^{(j)}\right); 1 \le j \le k, 1 \le i \le r_j, 1 \le s \le {r_{j+1}-r_j}\right], 
\]
and define the ideal $I \subset S$ generated by
\begin{equation}\label{eqn:rel_classical}
    \sum_{i+s=\ell} e_i\left(X^{(j)}\right) e_s\left(Y^{(j)}\right) - e_\ell\left(X^{(j+1)}\right), \quad 1 \le j \le k \text { and }1\leq \ell\leq r_{j+1}. 
\end{equation}

\begin{prop}\label{prop:classical-Whitney}
There is an isomorphism of\, $\K_T(\pt)$-algebras 
\[ \Psi\colon S/I \lra \K_T(X) \]
sending $e_i(X^{(j)})$ to $\wedge^i \cS_{j}$ and $e_i(Y^{(j)})$ to $\wedge^i {(\cS_{{j+1}}/\cS_{{j}})}$.
\end{prop}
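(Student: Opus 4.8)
The plan is to establish the isomorphism $\Psi\colon S/I \to \K_T(X)$ in three stages: first construct a well-defined algebra map, then a surjectivity argument, and finally a rank count (or a reduction to a known presentation) to force injectivity. The well-definedness is immediate from the Whitney relations discussed above: the elements $\wedge^i\cS_j$, $\wedge^s(\cS_{j+1}/\cS_j)$, and the $T_\ell$ lie in $\K_T(X)$, and the multiplicativity of $\lambda_y$ applied to $0\to\cS_j\to\cS_{j+1}\to\cS_{j+1}/\cS_j\to 0$ shows that the generators \eqref{eqn:rel_classical} of $I$ map to zero. So $\Psi$ descends to $S/I\to\K_T(X)$.

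For surjectivity, I would argue that the exterior powers of the tautological bundles $\cS_j$ (together with the equivariant scalars $\K_T(\pt)$) generate $\K_T(X)$ as an algebra. The cleanest route is via the iterated projective-bundle (or iterated flag-bundle) structure: $\Fl(r_1,\dots,r_k;\C^n)$ is built by successive Grassmann bundles, and the Leray--Hirsch-type theorem in K-theory (see \cite{chriss2009representation}, or the corresponding statement for $\K_T$) shows $\K_T(X)$ is generated over $\K_T(\pt)$ by the Chern classes (equivalently, exterior powers) of the tautological subbundles at each stage. Since $e_i(X^{(j)})\mapsto\wedge^i\cS_j$ and the relations in $I$ allow one to express $e_\ell(X^{(j+1)})$ in terms of the $e_i(X^{(j)})$ and $e_s(Y^{(j)})$, and $X^{(k+1)}$ is sent to the (invertible) equivariant parameters, the image of $\Psi$ is a subalgebra containing all the generators, hence all of $\K_T(X)$.

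For injectivity I would compare free module ranks over $\K_T(\pt)$. On the target side, $\K_T(X)$ is a free $\K_T(\pt)$-module of rank $|W^P| = \binom{n}{r_1,\,r_2-r_1,\dots,\,n-r_k}$. On the source side, one shows $S/I$ is free over $\K_T(\pt)$ of the same rank: the relations \eqref{eqn:rel_classical} for fixed $j$ let one recursively eliminate the variables $e_\ell(X^{(j+1)})$, and after full elimination $S/I$ becomes a quotient of $\K_T(\pt)[e_i(X^{(1)}),e_s(Y^{(j)})]$ by relations that cut it down exactly to the appropriate Schubert basis; concretely, one may identify $S/I$ with (a Laurent-coefficient version of) the quotient studied by Lascoux \cite{lascoux:anneau}*{\S 7}, whose monomial basis is indexed by $W^P$. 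A surjective map of free modules of equal finite rank over a commutative ring is an isomorphism, so $\Psi$ is injective, completing the proof.

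The main obstacle I expect is the rank computation for $S/I$ — i.e., exhibiting an explicit $\K_T(\pt)$-basis of $S/I$ of size $|W^P|$ and checking the elimination is clean (no collapse or extra relations, and the equivariant parameters cause no trouble since the relations at the top stage involve the invertible $T_\ell$ only through elementary symmetric functions). One must be careful that the ideal $I$ is generated by \emph{all} of \eqref{eqn:rel_classical} including the top index $j=k$, which forces the symmetric functions of the last quotient $\cS_{k+1}/\cS_k$ to agree with those of $\C^n$; this is exactly what pins the rank down to $|W^P|$ rather than leaving it too large. An alternative to a direct basis construction is to invoke the known non-equivariant presentation of Lascoux and then note that freeness and the presentation are preserved under the flat base change $\Z\to\K_T(\pt)$, reducing the equivariant statement to the classical one; I would present the argument whichever way keeps the exposition shortest.
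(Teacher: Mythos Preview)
Your outline is correct and the three-stage structure (well-definedness, surjectivity, injectivity via rank count) matches the paper's proof. The differences are in how surjectivity and the rank count are executed.

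For surjectivity, you invoke the iterated Grassmann-bundle structure and a Leray--Hirsch argument in $\K_T$; the paper instead works explicitly with double Grothendieck polynomials. It first treats $\Fl(n)$ using Buch's theorem that every Schubert class is a polynomial in the classes $1-(\cS_i/\cS_{i-1})^{-1}$, then rewrites these in terms of the line-bundle quotients $\cS_i/\cS_{i-1}$ (using that their product is $\det\C^n\in\K_T(\pt)$ to invert). For partial flags it pulls back along $p\colon\Fl(n)\to X$ and uses the symmetry of the Grothendieck polynomials in each block to see that $p^*\cO^w$ is a polynomial in the $\wedge^s(\cS_{r_{i+1}}/\cS_{r_i})$ and $\det(\cS_{r_j})^{-1}$. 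Your Leray--Hirsch route is cleaner conceptually; the paper's route is more explicit and self-contained.

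For the rank of $S/I$ over $\K_T(\pt)$, the paper avoids entirely the elimination/basis computation you flag as the main obstacle. The key observation is that the \emph{same} ideal of relations \eqref{eqn:rel_classical}, taken in the polynomial ring $\Z[T_1,\dots,T_n][e_i(X^{(j)}),e_s(Y^{(j)})]$, presents the equivariant \emph{cohomology} ring $\HH^*_T(X)$ (with $e_i(X^{(j)})\mapsto c_i^T(\cS_j)$), which is classically known to be free of rank $|W^P|$ over $\Z[T_1,\dots,T_n]$. Tensoring this presentation along $\Z[T_1,\dots,T_n]\to\K_T(\pt)$ gives $S/I$, hence the same rank. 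This sidesteps any direct manipulation of the ideal. Your suggested ``flat base change $\Z\to\K_T(\pt)$ from the non-equivariant Lascoux presentation'' is not quite the right reduction, since the top-stage relations genuinely involve the $T_\ell$; the correct base change is from $\Z[T_1,\dots,T_n]$, and the source ring one recognizes is $\HH^*_T(X)$ rather than $\K(X)$.
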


\begin{proof}
Denote the conjectured presentation ring by $A$. The K-theoretic Whitney relations imply that $\lambda_y(\cS_j) \cdot \lambda_y(\cS_{j+1}/\cS_j) = \lambda_y(\cS_{j+1})$.  Then the geometric interpretation of the variables $X^{(j)}, Y^{(j)}$ in terms of the splitting principle before the theorem implies that $\Psi$ is a well-defined $\K_T(\pt)$-algebra homomorphism.

To prove the surjectivity of $\Psi$, we first consider the case when $X=\Fl(n)$ is the full flag variety, and we utilize the theory of double Grothendieck polynomials; see \cite{fulton.lascoux,buch:grothendieck}. It was proved in \cite[Theorem~2.1]{buch:grothendieck} that each Schubert class in $\K_T(X)$ may be written as a (double Grothendieck) polynomial in
\begin{equation*}\label{E:vars} 
1-(\C^n/\cS_{n-1})^{-1},\ 1- (\cS_{n-1}/\cS_{n-2})^{-1},\ \ldots\ ,\ 1 - (\cS_2/\cS_1)^{-1},\ 1-(\cS_1)^{-1} 
\end{equation*}
with coefficients in $\K_T(\pt)$. Note that in $\K_T(X)$  
\begin{align*}
    (\cS_{i}/\cS_{i-1})^{-1}
    =&\det(\C^n)^{-1}\cdot\C^n/\cS_{n-1}\cdots{\cS_{i+1}/\cS_{i}}\cdot\cS_{i-1}/\cS_{i-2}\cdots\cS_2/\cS_1\cdot\cS_1 
\end{align*}
for \(i=1,\dots,n\).  Therefore, each Schubert class may be written as a polynomial in variables \(\cS_{i}/\cS_{i-1}\) for \(i=1,\dots,n\) with coefficients in \(\K_T(\pt)\). This proves the surjectivity in this case.

For partial flag varieties, consider the injective ring homomorphism given by pulling back via the natural projection $p\colon \Fl(n) \to \Fl(r_1, \ldots, r_k;n)$. The pullbacks of Schubert classes and of the tautological bundles are
\[ p^*\cO^w = \cO^w, \quad p^*\left(\wedge^\ell \cS_i\right) = \wedge^\ell \cS_{r_i}, \quad 
p^*\left(\wedge^\ell (\cS_i/\cS_{i-1})\right) = \wedge^\ell \left(\cS_{r_i}/\cS_{r_{i-1}}\right) \] for any $w \in S_n^{r_1, \ldots, r_k}$, any $ 1 \le i \le k$, and any $\ell$; here $S_n^{r_1, \ldots, r_k}$ denotes the set of minimal-length representatives of elements in the quotient $S_n/(S_{r_1} \times S_{r_2-r_1} \ldots \times S_{n-r_k})$.  On the other hand, since $w \in S_n^{r_1, \ldots, r_k}$, the Schubert classes $p^*\cO^w$ may be written as (double Grothendieck) polynomials symmetric in each block of variables $1-(\cS_{r_i+1}/\cS_{r_i})^{-1}, \ldots, 1-(\cS_{r_{i+1}}/\cS_{r_{i+1}-1})^{-1}$, for $0 \le i \le k$, that is, in the elementary symmetric functions $e_{\ell}((1-(\cS_{r_i+1}/\cS_{r_i})^{-1}, \ldots, 1-(\cS_{r_{i+1}}/\cS_{r_{i+1}-1})^{-1}))$ in these sets of variables. Each such elementary symmetric function may be further expanded as a $\Z$-linear combination of terms of the form
\begin{equation*}\label{E:fractions}
  \frac{e_{s}\left(\cS_{r_i+1}/\cS_{r_i}, \ldots, \cS_{r_{i+1}}/\cS_{r_{i+1}-1}\right)}{\cS_{r_i+1}/\cS_{r_i}\cdot \ldots \cdot \cS_{r_{i+1}}/\cS_{r_{i+1}-1}} = \frac{\wedge^s\left(\cS_{r_{i+1}}/\cS_{r_i}\right)}{\det \left(\cS_{r_{i+1}}\right) / \det \left(\cS_{r_i}\right)}. \end{equation*}
Observe that $\det \C^n = \prod_{i=1}^{k+1} {\det} (\cS_{r_i}/\cS_{r_{i-1}}) {= \det(\cS_{r_{j}}) \prod_{i=j+1}^{k+1}\det(\cS_{r_i}/\cS_{r_{i-1}})}$.  Therefore,
\[ \det\left( \cS_{r_{j}}\right)^{-1} = (\det \C^n)^{-1} {\prod_{i=j+1}^{k+1}\det\left(\cS_{r_i}/\cS_{r_{i-1}}\right)}, \quad j=1,\dots, k. \]
We have shown that the pullbacks of Schubert classes $p^*(\cO^w)$ are polynomials in the pullbacks of the tautological bundles and their quotients, and 
we deduce that $\Psi$ is surjective for partial flag manifolds.

Injectivity holds because $\K_T(\pt)$ is an integral domain and both $A$ and $\K_T(X)$ have the same rank as free modules over $\K_T(\pt)$.  To see the latter, consider the ring
\[ A':= \Z[T_1, \ldots, T_n]\left[e_i\left(X^j\right), e_s(Y^j): 1 \le j \le k, 1 \le i \le r_j, 1 \le s \le 
{r_{j+1}-r_j}\right]/I' , \] where \(I'\subseteq A'\) is the ideal generated by~\eqref{eqn:rel_classical}.  It is classically known that the $\Z[T_1, \ldots, T_n]$-algebra $A'$ is isomorphic to the equivariant cohomology algebra $\HH^*_T(X)$, with $e_i(X^{(j)})$ being sent to the equivariant Chern class $c_i^T(\cS_j)$ and $e_i(Y^{(j)})$ to the equivariant Chern class $c_i^T(\cS_{j+1}/\cS_j)$. (This follows for example by realizing the partial flag variety as a tower of Grassmann bundles, then using a description of the cohomology of the latter as in \cite[Example 14.6.6]{fulton:IT}.) In particular, $A'$ is a free $\Z[T_1, \ldots, T_n]$-algebra of rank equal to the number of Schubert classes in $X$. Then $A= A' \otimes_{\Z[T_1, \ldots, T_n]} \K_T(\pt)$ is a free $\K_T(\pt)$-module of the same rank.
\end{proof}

\subsection{The quantum K-Whitney presentation}\label{sec:QKpres}
We keep the notation from the previous subsection.  The following was conjectured in \cite{gu2023quantum}. 

\begin{thm}\label{conj2:lambda_y}
For \(j=1,\dots, k\) the following relations hold in $\QK_T(X)$:
        \begin{equation}\label{eqn:lambda_y_rel}
            \lambda_y(\cS_{j})\star\lambda_y(\cS_{{j+1}}/\cS_{j})=\lambda_y(\cS_{{j+1}})-y^{r_{j+1}-r_j}\frac{q_j}{1-q_j}\det(\cS_{{j+1}}/\cS_{j})\star(\lambda_y(\cS_{j})-\lambda_y(\cS_{{j-1}})).
        \end{equation} 
\end{thm}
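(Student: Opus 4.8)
The plan I would follow is to prove \eqref{eqn:lambda_y_rel} by induction on the number of steps $k$, reducing the relation indexed by a fixed $j$ to the quantum K Whitney relation of a Grassmannian, which is known from \cite{gu2022quantum} (the full statement is now a theorem of Huq-Kuruvilla \cite{huq2024quantum}, so here I only indicate the approach). First I would use the forgetful map $\pi = \pi_j\colon X = \Fl(r_1, \dots, r_k; n) \to X' \coloneqq \Fl(r_1, \dots, \wh{r_j}, \dots, r_k; n)$, which realizes $X$ as the Grassmann bundle $\Gr_{X'}(r_j - r_{j-1},\, \cS_{j+1}/\cS_{j-1})$ with relative tautological subbundle $\cS_j/\cS_{j-1}$, and with $\cS_{j-1}$ and $\cS_{j+1}$ pulled back from $X'$. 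Since $s_{r_j}$ lies in the Levi Weyl group of the parabolic defining $X'$, the Schubert curve $X_{s_{r_j}}$ is contracted by $\pi$; hence every genus-zero stable map to $X$ of class $d[X_{s_{r_j}}]$ has image in a single fiber, and $\Mb_{0,m}(X, d[X_{s_{r_j}}])$ is the relative Kontsevich space of the Grassmann bundle over $X'$. A base-change and projection-formula argument along the evaluation maps over $X'$ should then identify the part of the quantum K product on $X$ supported on powers of $q_j$ with the fiberwise relative quantum K product of $\Gr(r_j - r_{j-1},\, r_{j+1} - r_{j-1})$, where the relation of \cite{gu2022quantum} applied to the ambient bundle $\cS_{j+1}/\cS_{j-1}$ reads
\[
\lambda_y(\cS_j/\cS_{j-1}) \star \lambda_y(\cS_{j+1}/\cS_j) = \lambda_y(\cS_{j+1}/\cS_{j-1}) - y^{r_{j+1} - r_j} \frac{q_j}{1 - q_j} \det(\cS_{j+1}/\cS_j) \star \bigl(\lambda_y(\cS_j/\cS_{j-1}) - 1\bigr).
\]

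To pass from the quotient bundles back to $\cS_j$ and $\cS_{j+1}$ I would multiply through by the class $\lambda_y(\cS_{j-1})$, which over each fiber of $\pi$ is pulled back, and apply the classical Whitney identities $\lambda_y(\cS_{j-1}) \lambda_y(\cS_j/\cS_{j-1}) = \lambda_y(\cS_j)$ and $\lambda_y(\cS_{j-1}) \lambda_y(\cS_{j+1}/\cS_{j-1}) = \lambda_y(\cS_{j+1})$ of \Cref{prop:classical-Whitney}; this reproduces exactly the combination $\det(\cS_{j+1}/\cS_j) \star (\lambda_y(\cS_j) - \lambda_y(\cS_{j-1}))$ in the correction term of \eqref{eqn:lambda_y_rel}. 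The remaining point is to show that $\lambda_y(\cS_j) \star \lambda_y(\cS_{j+1}/\cS_j) - \lambda_y(\cS_{j+1})$ involves no quantum parameter other than $q_j$, i.e.\ is divisible by $q_j$; here I would use that $\lambda_y(\cS_{j+1})$ is a $\pi$-pullback, together with a vanishing statement for $\K$-theoretic Gromov--Witten invariants carrying a pulled-back insertion, to eliminate the contributions of curve classes not contracted by $\pi$.

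The hard part will be the K-theoretic fiberwise reduction in the first step. In quantum cohomology the analogous fact---that Gromov--Witten invariants of a fiber bundle in fiber classes are computed on the fiber---is immediate from deformation invariance and dimension counting, but in quantum K theory one must instead push the virtual structure sheaf of $\Mb_{0,m}(X, d[X_{s_{r_j}}])$ forward along the structure map to $X'$ and check compatibility of the perfect obstruction theories, which is delicate; relatedly, one must verify that multiplying by the pullback $\lambda_y(\cS_{j-1})$ interacts with the $\star$-product as claimed. A cleaner route in the spirit of \cite{gu2022quantum} would bypass the moduli-theoretic argument via the abelian/nonabelian correspondence: present $\QK_T(X)$ through its quiver GLSM, in which $\wedge^i \cS_j$ becomes the $i$-th elementary symmetric function in the Coulomb-branch variables at the $j$-th node, and deduce \eqref{eqn:lambda_y_rel} from the $\K$-theoretic Bethe-ansatz equations, where the factor $\frac{q_j}{1 - q_j}$---rather than the bare $q_j$ of quantum cohomology---arises naturally. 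In any case, once \eqref{eqn:lambda_y_rel} is in hand, \Cref{thm:qktfg} upgrades it to the full quantum K presentation of $X$, which is how we use it here.
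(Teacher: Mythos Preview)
The paper does not give its own proof of this theorem. It records the statement as a conjecture from \cite{gu2023quantum}, notes the Grassmannian and incidence-variety cases proved earlier, and then cites Huq-Kuruvilla \cite{huq2024quantum} for the general case. So there is nothing in the paper to compare your argument against beyond that attribution; the paper only \emph{uses} the relations as input to \Cref{thm:qktfg}.

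Your sketch is honest about being a sketch, but there is a structural gap beyond the ones you flag. The fiberwise reduction through $\pi_j$ can at best establish \eqref{eqn:lambda_y_rel} in the quotient $\QK_T(X)/\langle q_i:i\neq j\rangle$, i.e.\ after setting the other quantum parameters to zero. You then assert that ``$\lambda_y(\cS_j)\star\lambda_y(\cS_{j+1}/\cS_j)-\lambda_y(\cS_{j+1})$ involves no quantum parameter other than $q_j$,'' but this is not the same as the relation holding in the full ring: the right-hand side of \eqref{eqn:lambda_y_rel} also contains a $\star$-product, $\det(\cS_{j+1}/\cS_j)\star(\lambda_y(\cS_j)-\lambda_y(\cS_{j-1}))$, which in $\QK_T(X)$ carries contributions from \emph{all} $q_i$. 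So even granting your vanishing claim for the left-hand side, you would still need to know that the mixed-$q$ terms on the two sides match, and nothing in the fiber argument sees those. Relatedly, the step where you multiply by the pullback $\lambda_y(\cS_{j-1})$ and invoke the classical Whitney identities is only valid in the $q_j$-only quotient; in the full ring $\lambda_y(\cS_{j-1})\star\lambda_y(\cS_j/\cS_{j-1})$ differs from $\lambda_y(\cS_j)$ by the $(j-1)$-st relation you are trying to prove.

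Your alternative route via the abelian/nonabelian correspondence is in fact closer in spirit to what Huq-Kuruvilla actually does (abelianization and twisted $I$-functions), and avoids the fiber-bundle difficulties entirely; if you want to indicate a genuine strategy rather than cite the result, that is the one to point to.
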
 

These relations were proved for Grassmannians in \cite{gu2022quantum}, and for the incidence varieties $\Fl(1,n-1;n)$ in \cite{GMSXZZ:QKW}. The full conjecture was recently proved in \cite{huq2024quantum}.\begin{footnote}{Added in revision: a different proof was obtained recently by combining the K-theoretic divisor axiom in \cite{LNSX:QKdiv} with the earlier papers \cite{GMSXZZ:QKW,AHMOX:QKW}.}\end{footnote}

To get an abstract presentation, we start by transforming~\eqref{eqn:lambda_y_rel}. As in Section~\ref{sec:kflag}, let
\[
    {X}^{(j)}=\left(X^{(j)}_1,\dots,X^{(j)}_{r_j}\right)\quad\text{and}\quad{Y}^{(j)}=\left(Y^{(j)}_1,\dots, Y^{(j)}_{r_{j+1}-r_j}\right)
\] 
denote formal variables for \(j=1,\dots,k\). Let \(X^{(k+1)}\coloneqq (T_1,\dots,T_n)\) be the equivariant parameters in \(\K_T(\pt)\). Let \(e_\ell({X}^{(j)})\) and \(e_\ell({Y}^{(j)})\) be the \(\supth{\ell}\) elementary symmetric polynomials in \({X}^{(j)}\) and \({Y}^{(j)}\), respectively.

\begin{defn}\label{defn:Iq}
Let
    \[
        S=\K_T(\pt)\left[e_1\left(X^{(j)}\right),\dots, e_{r_j}\left(X^{(j)}\right),e_1\left(Y^{(j)}\right),\dots,e_{r_{j+1}-r_j}\left(Y^{(j)}\right), j=1,\dots, k\right].
    \]
    Let \(I_q\subset S[\![\mathbf q]\!]=S[\![q_1,\dots,q_k]\!]\) be the ideal generated by the coefficients of \(y\) in
\begin{multline*}\label{eqn:qkrel}
  \prod_{\ell=1}^{r_j}\left(1+y X^{(j)}_\ell\right)\prod_{\ell=1}^{r_{j+1}-r_j}\left(1+y Y^{(j)}_\ell\right)-\prod_{\ell=1}^{r_{j+1}}\left(1+y X^{(j+1)}_\ell\right)\\
  +y^{{r_{j+1}-r_j}}\frac{q_j}{1-q_j}\prod_{\ell=1}^{r_{j+1}-r_j}Y^{(j)}_\ell\left(\prod_{\ell=1}^{r_j}\left(1+yX^{(j)}_\ell\right)-\prod_{\ell=1}^{r_{j-1}}\left(1+yX^{(j-1)}_\ell\right)\right),\quad j=1,\dots, k.
\end{multline*}
\end{defn}

\begin{cor}\label{thm:all_relations}
There is an isomorphism of\, \(\K_T(\pt)[\![\mathbf q]\!]\)-algebras
    \[
        \Psi\colon {S[\![\mathbf q]\!]}/I_q\lra \QK_T(X)
    \]
    sending \(e_\ell(X^{(j)})\) to \(\wedge^\ell(\cS_{j})\) and \(e_\ell(Y^{(j)})\) to \(\wedge^\ell(\cS_{{j+1}}/\cS_{j})\).
    \end{cor}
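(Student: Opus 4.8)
The plan is to deduce \Cref{thm:all_relations} from \Cref{thm:qktfg} by verifying its hypotheses for the data coming from \Cref{prop:classical-Whitney} and \Cref{conj2:lambda_y}. Concretely, I would take the indeterminates $m_1, \ldots, m_s$ to be the generators $e_i(X^{(j)})$ and $e_s(Y^{(j)})$ of $S$ over $\K_T(\pt)$, and take $P_1, \ldots, P_r$ to be the generators of the classical ideal $I$ from \eqref{eqn:rel_classical}; \Cref{prop:classical-Whitney} provides exactly the required $\K_T(\pt)$-algebra isomorphism $\Phi = \Psi: S/I \xrightarrow{\sim} \K_T(X)$. It then remains to produce power series $\tilde P_i \in \K_T(\pt)[m_1,\ldots,m_s][\![\mathbf q]\!]$ satisfying the two bulleted conditions of \Cref{thm:qktfg}.

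For this, I would define the $\tilde P_i$ to be the coefficients of $y^\ell$ (for the appropriate range of $\ell$) in the displayed three-term expression of \Cref{defn:Iq}; these are precisely the generators of $I_q$, written in the variables $e_i(X^{(j)}), e_s(Y^{(j)})$ after expanding the symmetric combinations of Chern roots. For the first condition, I would set $\mathbf q = 0$: the extra term in \Cref{defn:Iq} carries a factor $q_j/(1-q_j)$, which vanishes at $\mathbf q = 0$, so each $\tilde P_i$ specializes to the corresponding coefficient of $y^\ell$ in $\lambda_y(\cS_j)\lambda_y(\cS_{j+1}/\cS_j) - \lambda_y(\cS_{j+1})$, which is exactly a generator $P_i$ of the classical ideal $I$ from \eqref{eqn:rel_classical}. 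For the second condition, I would invoke \Cref{conj2:lambda_y} (proved in \cite{huq2024quantum}): rearranging \eqref{eqn:lambda_y_rel} shows that $\lambda_y(\cS_j)\star\lambda_y(\cS_{j+1}/\cS_j) - \lambda_y(\cS_{j+1}) + y^{r_{j+1}-r_j}\tfrac{q_j}{1-q_j}\det(\cS_{j+1}/\cS_j)\star(\lambda_y(\cS_j) - \lambda_y(\cS_{j-1})) = 0$ in $\QK_T(X)[y]$, and extracting the coefficient of each $y^\ell$ gives that $\tilde P_i = 0$ in $\QK_T(X)$ once the $e_i(X^{(j)}), e_s(Y^{(j)})$ are interpreted as $\wedge^i \cS_j$, $\wedge^s(\cS_{j+1}/\cS_j)$ via $\Phi$. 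A small bookkeeping point here is that $\det(\cS_{j+1}/\cS_j) = e_{r_{j+1}-r_j}(Y^{(j)}) = \prod_\ell Y^{(j)}_\ell$, so the quantum correction term really is polynomial in the chosen generators times $q_j/(1-q_j) \in \K_T(\pt)[\![\mathbf q]\!]$, as required for $\tilde P_i$ to lie in $\K_T(\pt)[m_1,\ldots,m_s][\![\mathbf q]\!]$.

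Granting these two verifications, \Cref{thm:qktfg} immediately yields an isomorphism of $\K_T(\pt)[\![\mathbf q]\!]$-algebras $\widetilde\Phi: \K_T(\pt)[\![\mathbf q]\!][m_1,\ldots,m_s]/\langle \tilde P_1, \ldots, \tilde P_r\rangle \to \QK_T(X)$; since the source ring is by construction $S[\![\mathbf q]\!]/I_q$ and $\widetilde\Phi$ sends $e_\ell(X^{(j)}) \mapsto \wedge^\ell \cS_j$ and $e_\ell(Y^{(j)}) \mapsto \wedge^\ell(\cS_{j+1}/\cS_j)$ (being a lift of $\Phi = \Psi$), this is exactly the claimed $\Psi$, completing the proof. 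I do not expect a serious obstacle: the only genuine content is imported (\Cref{prop:classical-Whitney} for the classical presentation, \cite{huq2024quantum} for \Cref{conj2:lambda_y}, and \Cref{thm:qktfg} for the Nakayama argument), and the remaining work is the routine translation between the $\lambda_y$-class form of the relations and their expansion in elementary symmetric functions, together with checking that the $\mathbf q \to 0$ specialization recovers \eqref{eqn:rel_classical}. The one place to be careful is matching the generating set of $I_q$ (coefficients of powers of $y$ in the displayed expression) with the set $\{\tilde P_i\}$ fed into \Cref{thm:qktfg}, and confirming that the count of relations and the ranges of indices $\ell$ agree on both sides.
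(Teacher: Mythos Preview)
Your proposal is correct and follows exactly the paper's approach: the paper's own proof is a two-line summary that the generators of $I_q$ specialize at $\mathbf q=0$ to those of $I$ from \Cref{prop:classical-Whitney}, and then invokes \Cref{conj2:lambda_y} and \Cref{thm:qktfg}. You have simply spelled out the hypothesis checks in more detail, and the bookkeeping points you flag (that $\det(\cS_{j+1}/\cS_j)=e_{r_{j+1}-r_j}(Y^{(j)})$ and that the indexing of the $\tilde P_i$ matches) are routine and pose no obstacle.
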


\begin{proof}
When  $q_i=0$ for all $i$, the generators of the ideal $I_q$ are those of the ideal $I$ in \Cref{prop:classical-Whitney}. Then the claim follows from Theorems~\ref{conj2:lambda_y} and~\ref{thm:qktfg}.\end{proof}

As explained in \cite{gu2023quantum}, the Whitney presentation above may be viewed as a \(\K\)-theoretic analogue of a presentation obtained by Gu and Kalashnikov \cite{GuKa} of the quantum cohomology ring of quiver flag varieties.  A presentation of the equivariant quantum K-ring of the complete flag varieties $\Fl(n)$ was recently proved by Maeno, Naito, and Sagaki \cite{maeno.naito.sagaki:QKideal} (see also \cite{lenart.maeno:quantum}). It is related to the Toda lattice presentation from quantum cohomology; see \cite{givental.kim, kim.toda}.  Their presentation is built onto the relations satisfied by the $\lambda_y$-classes $\lambda_y(\cS_j/\cS_{j-1})$ of the quotient bundles.  By eliminating the $\lambda_y$-classes $\lambda_y(\cS_j)$, it is not difficult to show that the Whitney relations above for $\Fl(n)$ imply the presentation from \cite{maeno.naito.sagaki:QKideal}.

Relations similar to those from \cite{maeno.naito.sagaki:QKideal,givental.kim} also appear in \cite{givental.lee:quantum,anderson.et.al:finiteI}, in relation to the finite-difference Toda lattice; in \cite{koroteev}, in relation to the study of the quasimap quantum K-theory of the cotangent bundle of $\Fl(n)$ and the Bethe ansatz (see also \cite{Gorbounov:2014}); and in \cite{ikeda.iwao.maeno}, in relation to the Peterson isomorphism in quantum K-theory.

\mathversion{bold}
\subsection{An example for $\QK_T(\Fl(3))$}\label{sec:example}
\mathversion{normal}

We illustrate the presentation above for $\QK_T(\Fl(3))$.  There are two quantum K-Whitney relations:
\[ \begin{split} \lambda_y(\cS_1) \star \lambda_y(\cS_2/\cS_1) & = \lambda_y(\cS_2) - y \frac{q_1}{1-q_1} \det (\cS_2/\cS_1) \star (\lambda_y(\cS_1)-1), \\
  \lambda_y(\cS_2) \star \lambda_y(\C^3/\cS_2) & = \lambda_y(\C^3) - y \frac{q_2}{1-q_2} \det (\C^3/\cS_2) \star (\lambda_y(\cS_2)-\lambda_y(\cS_1)). \end{split} \]
In terms of abstract variables, these relations read
\[
\begin{split}
  \left(1+y X^{(1)}_1\right) \left(1+y Y^{(1)}_1\right)  & = \left(1+y X^{(2)}_1\right) \left(1+y X^{(2)}_2\right) - y^2 \frac{q_1}{1-q_1} X^{(1)}_1 Y^{(1)}_1,  \\
  \left(1+y X^{(2)}_1\right) \left(1+y X^{(2)}_2\right) \left(1+y Y^{(2)}_1\right)  & = (1+ yT_1)(1+yT_2)(1+yT_3) \\
  & \hphantom{=}\; -y \frac{q_2}{1-q_2} Y^{(2)}_1
 \left(\left(1+y X^{(2)}_1\right) \left(1+y X^{(2)}_2\right)-\left(1+y X^{(1)}_1\right)\right).
\end{split} \]
Identifying powers of $y$, we obtain the following:
\begin{gather*}
  X_1^{(1)}+Y_1^{(1)}-e_1(X^{(2)}),\\
  \frac{1}{1-q_1}X_1^{(1)}Y_1^{(1)}-e_2(X^{(2)}),\\
  e_1(X^{(2)})+\frac{1}{1-q_2}Y_1^{(2)}-e_1(T),\\
  \frac{1}{1-q_2}\left(e_1\left(X^{(2)}\right)-q_2X_1^{(1)}\right)Y_1^{(2)}-\left(e_2(T)-e_2\left(X^{(2)}\right)\right),\\
   \frac{1}{1-q_2}e_2\left(X^{(2)}\right)Y_1^{(2)}-e_3(T).
\end{gather*}
These generate the ideal $I_q$ of relations in $\QK_T(\Fl(3))$.

Note that $1-q_i$ is invertible in the ground ring $\K_T(\pt)[\![q_1,q_2]\!]=R[\![q_1,q_2]\!]$, so one may multiply by factors $(1-q_i)$ to get the following polynomial version of the relations:
 \begin{gather*}
            X_1^{(1)}+Y_1^{(1)}-e_1(X^{(2)}),\\
            X_1^{(1)}Y_1^{(1)}-(1-q_1)e_2(X^{(2)}),\\
            (1-q_2)\left(e_1\left(X^{(2)}\right)+Y_1^{(2)}-e_1(T)\right),\\
            \left(e_1\left(X^{(2)}\right)-q_2X_1^{(1)}\right)Y_1^{(2)}-(1-q_2)\left(e_2(T)-e_2\left(X^{(2)}\right)\right),\\
            e_2\left(X^{(2)}\right)Y_1^{(2)}-(1-q_2)e_3(T).
        \end{gather*}
Denote by $I_q^\poly$ the ideal of $S[q_1, q_2]$ generated by these relations. There is a natural ring homomorphism
 \[ \Phi^\poly\colon S[q_1,q_2]/I_q^\poly\lra S[\![q_1,q_2]\!]/I_q  \simeq \QK_T(\Fl(3)).\]
Using the third relation in $I_q^\poly$, one checks that \(e_1(X^{(2)})+Y_1^{(2)}-e_1(T)\) is a nonzero element in the kernel of~\(\Phi^\poly\). This shows that working with completions (or at least using a localized ring where $1-q_i$ is invertible) is necessary in \Cref{thm:qktfg}.
 
Furthermore, one may prove that $\Phi^\poly$ is surjective, as follows.  Let \(\QK_T^\poly(X)\subseteq\QK_T(X)\) be the subring generated by \(\cO^{s_1}\) and \(\cO^{s_2}\) over the ground ring \(\K_T(\pt)[q_1,q_2]\). Algorithm 4.16 of \cite{xu2021quantum} gives an algorithm that recursively expresses any Schubert class as a polynomial in \(\cO^{s_1},\ \cO^{s_2}\) with coefficients in \(\K_T(\pt)[q_1,q_2]\). Combined with \cite[Theorem~4.5]{xu2021quantum}, this means that when we express the product of two Schubert classes as a linear combination of Schubert classes in \(\QK_T(X)\), the coefficients are always in \(\K_T(\pt)[q_1,q_2]\). Therefore, \(\QK_T^\poly(X)\) can be identified with \(\K_T(X)\otimes\Z[q_1,q_2]\) as a module.  Since
\begin{equation*}\label{eqn:div-bun}
    \det(\cS_1)=T_1(1-\cO^{s_1}) \quad \text{and}   \quad  \det(\cS_2)=T_1T_2(1-\cO^{s_2}), 
\end{equation*}
it follows that \(\QK_T^\poly(X)\) is also generated by \(\cS_1\) and \(\det(\cS_2)\) over \(\K_T(\pt)[q_1,q_2]\), proving the surjectivity claim.


\newcommand{\etalchar}[1]{$^{#1}$}
\providecommand{\bysame}{\leavevmode\hbox to3em{\hrulefill}\thinspace}

\end{document}